\newlength\mylen
\newlist{mycases}{enumerate}{1}
\setlist[mycases,1]{label=\textbf{Case~\arabic*.}, 
  labelwidth=\dimexpr-\mylen-\labelsep\relax,leftmargin=0pt,align=right}
\theoremstyle{plain}
\newtheorem{theorem}{Theorem}[section]
\newtheorem{proposition}[theorem]{Proposition}
\newtheorem{lemma}[theorem]{Lemma}
\newtheorem{cor}[theorem]{Corollary}
\theoremstyle{definition}
\newtheorem{claim}[theorem]{Claim}
\newtheorem{remark}[theorem]{Remark}
\newtheorem{example}[theorem]{Example}
\newtheorem*{theorem**}{Theorem\theoremnum}
\newenvironment{theorem*}[1][]{%
  \edef\theoremnum{\if\relax\detokenize{#1}\relax\else~#1\fi}
  \begin{theorem**}
}{%
  \end{theorem**}
}  
\newtheorem*{proposition**}{Proposition\theoremnum}
\newenvironment{proposition*}[1][]{%
  \edef\theoremnum{\if\relax\detokenize{#1}\relax\else~#1\fi}
  \begin{proposition**}
}{%
  \end{proposition**}
}  
\newtheorem*{lemma**}{Lemma\theoremnum}
\newenvironment{lemma*}[1][]{%
  \edef\theoremnum{\if\relax\detokenize{#1}\relax\else~#1\fi}
  \begin{lemma**}
}{%
  \end{lemma**}
}  
\newtheorem*{cor**}{Corollary\theoremnum}
\newenvironment{cor*}[1][]{%
  \edef\theoremnum{\if\relax\detokenize{#1}\relax\else~#1\fi}
  \begin{cor**}
}{%
  \end{cor**}
}
\newcommand{\addQEDstyle}[2]{\AtBeginEnvironment{#1}{\pushQED{\qed}\renewcommand{\qedsymbol}{#2}}\AtEndEnvironment{#1}{\popQED}}
\DeclareMathOperator{\id}{id}
\DeclareMathOperator{\Fr}{Fr}
\DeclareMathOperator{\CAT}{CAT(0)}
\DeclareMathOperator{\supp}{supp}
\DeclareMathOperator{\amod}{\mathfrak{mod}}
\newcommand{\Z}{\mathbb{Z}}
\newcommand{\Sph}{\mathbb{S}}
\newcommand{\Cp}{\mathscr{C}}
\newcommand{\Sp}{\mathscr{S}}
\newcommand{\Dp}{\mathscr{D}}
\DeclareMathOperator{\I}{I}
\title{CAT(0) cube complexes and asymptotically rigid mapping class groups}
\author{Marie Abadie \hspace{0.1 cm}\orcidlink{0000-0002-8951-4643}}
\email{marie.abadie@uni.lu}
\date{November 2024}
\begin{document}
\maketitle
\vspace{-1.0cm}
\begin{abstract}
The present paper contributes to the study of asymptotically rigid mapping class groups of infinitely-punctured surfaces obtained by thickening planar trees. In a paper from 2022, Genevois, Lonjou and Urech a study of the latter groups by using cube complexes. We determine in which cases their cube complexes are $\CAT$. From this study, we develop a family of $\CAT$ cubes complexes on which the asymptotically rigid mapping class groups act.
\end{abstract}

\section{Introduction}
Thompson's groups have been the subject of intense study in group theory and have proven to be a rich source of interesting examples \cite{Belk, Thoms,Shavgulidze2009THETG}. They inspired the construction of other groups through variations of their concepts, called \emph{Thompson-like groups}, \cite{higman1974finitely, Houghtonpres}. A number of recent work has been devoted to the study of braided versions of Thompson-like groups \cite{Brin_2007, dehornoy} i.e. extensions of Thompson-like groups by infinite braid groups, which turn out to be closely linked to mapping class groups of surfaces of infinite type \cite{aramayona2020big, Funar3}. 

In \cite{asymI}, Genevois, Lonjou and Urech study a particular family of braided Thompson-like groups called \emph{asymptotically rigid mapping class groups}. Their framework is inspired from \cite{Funar4, Funar2011AsymptoticallyRM}. These groups are subgroups of \emph{big mapping class groups} of infinitely-punctured surfaces obtained by thickening planar trees $A_{n,m}$, having one vertex of valence $m$ while all the others have valence $n+1$. Briefly, we give a rigid structure $\Sp^\sharp(A_{n,m})$ to the latter surfaces and consider the group of isotopy classes of homeomorphisms that preserve this structure 'almost everywhere'. To investigate the finiteness properties of these groups, denoted by $\amod(A_{n,m})$, the aforementioned authors present a novel family of cube complexes called $\Cp(A_{n,m})$. They raise the question of whether their cube complexes are non-positively curved \cite[Section 3.3]{asymI}. The answer to this question is the following result (Theorem \ref{Prop1} in the text):

\begin{theorem*}
The cube complex $\Cp(A_{n,m})$ is $\CAT$ if and only if $1 \leq m \leq n+1$.
\end{theorem*}

To study the case where $m\geq n$, we consider different rigid structures $\Sp^*(A_{n,m})$ which are related to the previous one through the following observation (Lemma \ref{LemmaIsoStruct}):
\begin{lemma*}
  For all $m,n \geq 1$, there is a group isomorphism \[\amod(\Sp^{\sharp}(A_{n,m})) \cong \amod(\Sp^*(A_{n,m-n+1})).\]
\end{lemma*}

In \cite{asymII}, the authors suggested that it may be possible to modify their construction. Hence, we introduce a cube complex $\Dp(A_{n,m})$ that is $\CAT$ for all $m,n\geq 1$, and upon which the asymptotically rigid mapping class groups of $\Sp^*(A_{n,m})$ act (Theorem \ref{DpisCAT0} and Corollary \ref{EndCor} in the text). 

\begin{theorem*}
     For all $m,n \geq 1$ the cube complex $\Dp(A_{n,m})$ is $\CAT$.
\end{theorem*}

Thus, we define a collection of $\CAT$ cube complexes,
$$
\mathcal{E}(A_{n,m}):= \left\{
    \begin{array}{ll}
        \Dp(A_{n,m-n+1}) & \mbox{if } m > n+1 \geq 1 \\
        \Cp(A_{n,m}) & \mbox{if } 1 \leq m \leq n+1
    \end{array}.
\right.
$$

\begin{cor*}
     For all $m,n \geq 1$, $\amod(A_{n,m})$ acts on the cube complex $\mathcal{E}(A_{n,m})$ which is $\CAT$.
\end{cor*}

\subsection*{Acknowledgements}
The main part of this work was elaborated in the context of a Master thesis under the supervision of Christian Urech who provided the main question, guidance and a friendly environment. We thank the referee for the comments and suggestions that helped us to improve the clarity of the present paper. This work was partially supported by the Luxembourg National Research Fund OPEN grant O19/13865598.

\section{Preliminaries}
Let $A$ be a locally finite planar tree. Its \emph{arboreal surface} is the surface $\Sp(A)$ obtained by embedding A into the plane and thickening it. Let $\Sp^\sharp(A)$ be the surface obtained from $\Sp(A)$ by adding a puncture for each vertex of the underlying tree $A$. We give a \emph{rigid structure} to $\Sp^\sharp(A)$ by dividing the surface into \emph{polygons} with a family of pairwise non-intersecting arcs whose endpoints are on the boundary of the surface such that: \begin{itemize}
        \item each arc intersects one unique edge of $A$ and this intersection is transverse,
        \item each polygon contains exactly one puncture in its interior.
    \end{itemize}
\begin{figure}[h]
    \centering
    \includegraphics[scale=0.18]{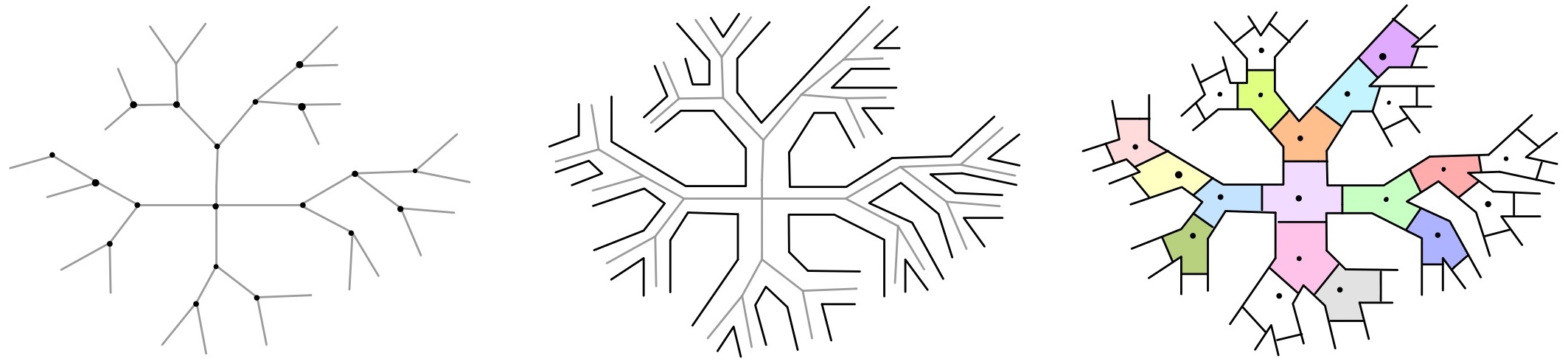}
    \caption{From left to right, pictures of $A_{2,4}$, $\Sp(A_{2,4})$ and $\Sp^\sharp(A_{2,4})$.}
    \label{Figure_00}
\end{figure} 

An \emph{admissible} subsurface $\Sigma \subseteq \Sp^\sharp(A)$ is defined as a connected subsurface that can be written as a finite union of polygons from the rigid structure. The \emph{height} of $\Sigma$, denoted as $h(\Sigma)$, refers to the number of punctures present within $\Sigma$. The  \emph{frontier} of $\Sigma$ refers to the union of all the arcs, called \emph{frontier arcs}, from the rigid structure lying in the boundary of $\Sigma$. It is denoted as $\Fr(\Sigma)$. A polygon $H$ is \emph{adjacent} to $\Sigma$ if it shares an arc with the frontier of $\Sigma$.
\medskip

An \emph{asymptotically rigid homeomorphism} of $\Sp^\sharp(A)$ is a homeomorphism $\phi$ from $\Sp^\sharp(A)$ to itself which respects the rigid structure almost everywhere. That is, there exists an admissible surface $\supp_{\phi}$, called the \emph{support} of $\phi$, such that:\begin{itemize}
        \item its image $\phi(\supp_{\phi})$ is an admissible surface,
        \item outside its support $\supp_{\phi}$, $\phi$ sends polygons to polygons.
    \end{itemize}

The group of isotopy classes of orientation-preserving asymptotically rigid homeomorphisms of $\Sp^\sharp(A)$ is denoted by $\amod(A)$ or $\amod(\Sp^\sharp(A))$ and is called the \emph{asymptotically rigid mapping class group} associated to $A$. Let $\phi \in \amod(A)$ and $\Sigma$ an admissible surface, $\phi$ is \emph{rigid} outside $\Sigma$ if each polygon of the rigid structure not in $\Sigma$ is mapped to a polygon. 

Funar and Kapoudjian were the first to consider the group corresponding to a regular tree of degree three and to study its generators and relations \cite{Funar4}. In \cite{asymI}, Genevois, Lonjou and Urech extend the definition to explore the finiteness properties of these groups when the tree $A$ is considered to be $A_{n,m}$.
\medskip

We define an equivalence relation on the set of pairs $(\Sigma, \phi)$, where $\Sigma$ is an admissible surface of $\Sp^\sharp(A)$ and $\phi$ lies in $\amod(A)$. Two pairs are related  $(\Sigma, \phi) \sim (\Sigma', \phi')$ if $\phi'^{-1} \phi$ is isotopic to an asymptotically rigid homeomorphism that is rigid outside $\Sigma$ and maps $\Sigma$ to $\Sigma'$. Observe that $(\Sigma, \phi) \sim ~(\Sigma, \phi')$ if $\phi$ is isotopic to $\phi'$. We denote the equivalence class of the pair $(\Sigma, \phi)$ by $[\Sigma, \phi]$.

In \cite{asymI}, the authors introduce a new family of cube complexes to explore the finiteness properties of this group when the tree $A$ is considered to be $A_{n,m}$. They define the cube complex $\Cp(A_{n,m})$ as follows: \begin{itemize}[-]
     \item \underline{vertices} $[\Sigma, \phi]$ for each admissible surface $\Sigma$ of $\Sp^\sharp(A_{n,m})$ and each $\phi\in\amod(A_{n,m})$;
     \item \underline{edges} between any two vertices of the form $[\Sigma, \phi]$ and $[\Sigma \cup H, \phi]$, where $H$ is a polygon adjacent to $\Sigma$;
     \item \underline{$k$-cubes} with underlying subgraphs of the form $\left\{ [\Sigma \cup {\bigcup}_{i \in I} H_i, \phi] \; |\; I \subset \{1,\dots,k\}\right\}$ where $[\Sigma, \phi]$ is a vertex and $H_1,\dots,H_k$ are distinct adjacent polygons to $\Sigma$.
 \end{itemize}

Let $g \in \amod(A_{n,m})$ and $[\Sigma, \phi]$ be a vertex in $\Cp(A_{n,m})$, we define: $g \cdot [\Sigma, \phi] := [\Sigma, g\circ \phi].$ This gives an action of $\amod(A_{n,m})$ on $\Cp(A_{n,m})$.
\medskip

Now, let us vary the rigid structure associated to $\Sp(A)$. Let $\Sp^*(A_{n,m})$ denote the punctured arboreal surface obtained from $\Sp(A_{n,m})$ by adding a puncture for each vertex of the tree excepting the vertex of valence $m$. We define another rigid structure on $\Sp^*(A_{n,m})$. To this end, we divide the surface into polygons with a family of pairwise non-intersecting arcs whose endpoints are on the boundary of the surface such that: \begin{itemize}
     \item each arc crosses once and transversely a unique edge of the tree,
     \item each polygon contains exactly one vertex of the underlying tree in its interior. Since each vertex corresponds to a puncture except the $m$-valence one, each polygon contains a puncture except the central one. 
 \end{itemize} 

The definitions of admissible surface, height, and frontier arc remain the same as for $\Sp^\sharp(A_{n,m})$. It is worth noting that the central polygon has height zero in this new formalism. An \emph{asymptotically rigid homeomorphism} of $\Sp^*(A_{n,m})$ is a homeomorphism $\phi$ from $\Sp^*(A_{n,m})$ to itself which respects the rigid structure almost everywhere. That is, there exists a minimal admissible surface called the \emph{support} of $\phi$, denoted $\supp^*_{\phi}$ such that:\begin{itemize}
        \item its image $\phi(\supp^*_{\phi})$ is an admissible surface,
        \item $\phi$ sends polygons to polygons outside $\supp^*_{\phi}$.
    \end{itemize}
We call $\amod(\Sp^*(A_{n,m}))$ the group of isotopy classes of orientation-preserving asymptotically rigid homeomorphisms of $\Sp^*(A_{n,m})$. To simplify the notation we set $\amod^*(A_{n,m}) := \amod(\Sp^*(A_{n,m}))$. 
\medskip

 \begin{lemma}\label{LemmaIsoStruct}
     There is a group isomorphism \[\amod(A_{n,m}) \cong \amod^*(A_{n,m-n+1}) \]
 \end{lemma}
\begin{proof}
    The following argument follows closely the proof of Lemma 2.4 in \cite{asymI}. We fix $u$ the central vertex in $A_{n,m}$ and $v$ one of its  neighbours. We let $A'_{n,m}$ be the tree obtained from $A_{n,m}$ by collapsing the edges between $u$ and $v$. Observe that $A'_{n,m}=A_{n,m+n-1}$. Let $M$ and $B$ denote the polygons of $\Sp^*(A_{n,m})$ containing $u$ and $v$, respectively. We define a new rigid structure on $\Sp^*(A_{n,m})$ by removing the arc common to $M$ and $B$, and denote it by $\Sp^\circ(A_{n,m})$. Notice that $\Sp^\circ(A_{n,m})$ coincides with $\Sp^{\sharp}(A'_{n,m})$ up to a homeomorphism that preserves the rigid structure. Figure \ref{Fig_2.09} illustrates the case $n=2$. 
    \begin{figure}[h]
    \centering
    \includegraphics[width=0.87\linewidth]{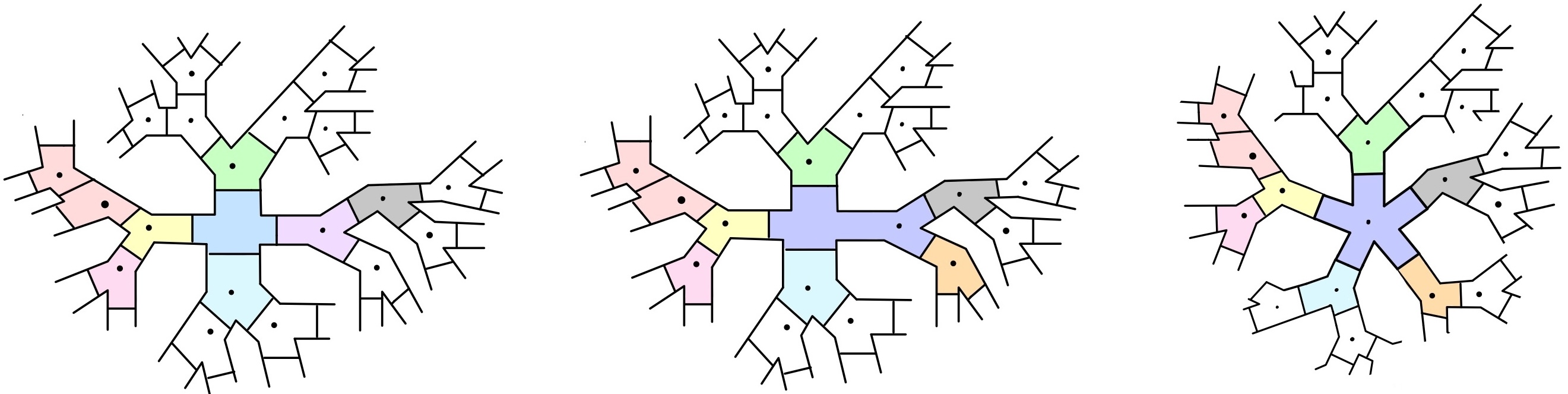}
    \caption{From left to right, pictures of $\Sp^*(A_{2,4})$, $\Sp^{\circ}(A_{2,4})$ and $\Sp^\sharp(A'_{2,4})$.}
     \label{Fig_2.09}
\end{figure}
    Therefore there exists a homeomorphism $\psi~:~\Sp^{\sharp}~(~A~'_{n,m}) ~\longrightarrow~ \Sp^*~(A_{n,m})$ that sends each polygon of $\Sp^{\sharp}(A'_{n,m})$ to a polygon of $\Sp^*(A_{n,m})$ except one that is sent to the union of two polygons. Hence, the conjugation by $\psi$ gives an isomorphism $\amod^*(A_{n,m})\cong \amod(A_{n,m+n-1})$ so $\amod^*(A_{n,m-n+1}) \cong \amod(A_{n,m})$. See Figure \ref{Figure_12.3} for the cases $n=1,2$.
    \begin{figure}[h]
    \centering
    \includegraphics[width=0.95\linewidth]{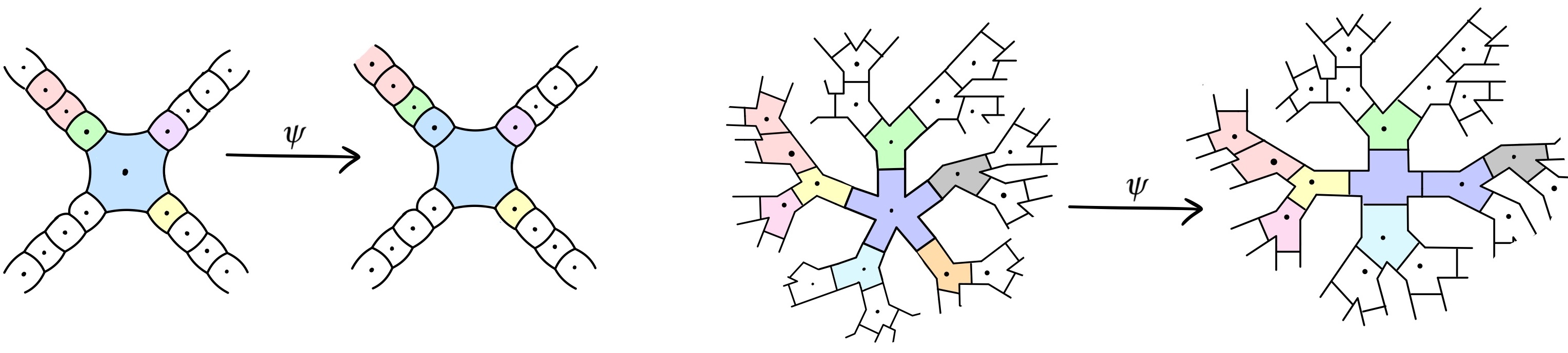}
    \caption{}
     \label{Figure_12.3}
\end{figure}

\end{proof}
In the following definition we work with the rigid structure $\Sp^*(A_{n,m})$.

Let $\Dp(A_{n,m})$ be the cube complex defined as follows: \begin{itemize}[-]
     \item \underline{vertices} $[\Sigma, \phi]$ where $\Sigma \subset \Sp^*(A_{n,m})$ is an admissible surface that contains the central polygon and $\phi\in\amod^*(A_{n,m})$;
     \item \underline{edges} between any two vertices of the form $[\Sigma, \phi]$ and $[\Sigma \cup H, \phi]$, where $H$ is a polygon adjacent to $\Sigma$;
     \item \underline{$k$-cube} with underlying subgraph of the form $\left\{ [\Sigma \cup {\bigcup}_{i \in I} H_i, \phi] \; |\; I \subset \{1,\dots,k\}\right\}$ where $[\Sigma, \phi]$ is a vertex and $H_1,\dots,H_k$ are distinct adjacent polygons to $\Sigma$.
 \end{itemize} Note that here two pairs are related $(\Sigma, \phi) \sim (\Sigma', \phi')$ if $\phi'^{-1} \phi$ is isotopic to an element of $\amod^*(A_{n,m})$ that is rigid outside $\Sigma$ and maps $\Sigma$ to $\Sigma'$. 
\medskip

Next, we present some properties about $\Cp(A_{n,m})$ and $\Dp(A_{n,m})$.

\begin{lemma}[Lemma 3.4 \cite{asymI}]\label{lemma3.4}
    Let $x=[\Sigma, \phi]$ and $y$ be two adjacent vertices of $\Cp(A_{n,m})$, resp. $\Dp(A_{n,m})$. If $h(y)>h(x)$, then there exists $H$ an adjacent polygon to $\Sigma$ such that  $y=[\Sigma \cup H, \phi]$.
\end{lemma}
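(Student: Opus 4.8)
The plan is to first unwind adjacency in terms of representatives, then use the height hypothesis to decide which of the two vertices carries the extra polygon, and finally transport the resulting description from an auxiliary representative back to the prescribed one $(\Sigma,\phi)$. The case of $\Cp(A_{n,m})$ being \cite[Lemma 3.4]{asymI}, the point is to check that the argument survives the passage to $\Sp^*(A_{n,m})$.

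By definition of the $1$-skeleton of $\Cp(A_{n,m})$, resp. $\Dp(A_{n,m})$, the edge joining $x$ and $y$ arises from an admissible surface $\Sigma_0$, a homeomorphism $\phi_0$, and a polygon $H_0$ adjacent to $\Sigma_0$, with $\{x,y\}=\{[\Sigma_0,\phi_0],\,[\Sigma_0\cup H_0,\phi_0]\}$. Since any asymptotically rigid homeomorphism preserves the set of punctures, $h$ is well defined on equivalence classes and $h(\Sigma_0\cup H_0)=h(\Sigma_0)+h(H_0)$. I claim $h(H_0)=1$. For $\Cp(A_{n,m})$ this is immediate, each polygon of $\Sp^\sharp(A_{n,m})$ carrying one puncture. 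For $\Dp(A_{n,m})$ the only polygon of height $0$ is the central polygon $M$, and $H_0$, being adjacent to $\Sigma_0$, is not contained in $\Sigma_0$; but every representative of a vertex of $\Dp(A_{n,m})$ contains $M$. Indeed, the class admits a representative containing $M$, and for any other representative $(\Sigma',\phi')$ the equivalence supplies a rigid homeomorphism between the complement of $\Sigma'$ and the complement of $M\subseteq$ that first surface, which must send the unique puncture-free polygon $M$ to $M$; hence $M\not\subseteq\Sigma'$ would be absurd. Thus $H_0\neq M$, so $h(H_0)=1$ and $h([\Sigma_0\cup H_0,\phi_0])>h([\Sigma_0,\phi_0])$; combined with $h(y)>h(x)$ this forces $x=[\Sigma_0,\phi_0]$ and $y=[\Sigma_0\cup H_0,\phi_0]$.

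It remains to rewrite $y$ using the given $\Sigma$. From $[\Sigma,\phi]=[\Sigma_0,\phi_0]$ we obtain an asymptotically rigid homeomorphism $g$, isotopic to $\phi_0^{-1}\phi$, rigid outside $\Sigma$ and with $g(\Sigma)=\Sigma_0$. On the complement of $\Sigma$, $g$ is then a rigid homeomorphism onto the complement of $\Sigma_0$, carrying polygons to polygons and $\Fr(\Sigma)$ onto $\Fr(\Sigma_0)$; hence $H:=g^{-1}(H_0)$ is a polygon adjacent to $\Sigma$. Being rigid off $\Sigma$, the map $g$ is a fortiori rigid off the larger admissible surface $\Sigma\cup H$, and $g(\Sigma\cup H)=\Sigma_0\cup H_0$, so $g$ itself witnesses $(\Sigma\cup H,\phi)\sim(\Sigma_0\cup H_0,\phi_0)$. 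Therefore $y=[\Sigma_0\cup H_0,\phi_0]=[\Sigma\cup H,\phi]$ with $H$ adjacent to $\Sigma$, as required. The only steps needing care — and the single addition to the proof of \cite[Lemma 3.4]{asymI} — are the observation that every representative of a vertex of $\Dp(A_{n,m})$ contains $M$ (so the height count is unchanged) and the verification that a homeomorphism rigid off $\Sigma$ transports polygons adjacent to $\Sigma$ to polygons adjacent to $g(\Sigma)$ and stays rigid off $\Sigma\cup H$; I anticipate no obstacle beyond this bookkeeping.
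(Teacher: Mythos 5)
Your proof is correct. Note that the paper itself gives no argument here: it simply imports Lemma 3.4 of \cite{asymI} and asserts that it extends to $\Dp(A_{n,m})$, so your write-up supplies exactly the details that are left implicit. The two points you isolate are indeed the only ones that need checking beyond the $\Cp(A_{n,m})$ case — that in $\Dp(A_{n,m})$ every representative of a vertex contains the puncture-free central polygon $M$ (so the polygon attached along an edge has height $1$ and the height still increases by exactly one), and that a homeomorphism rigid outside $\Sigma$ carries polygons adjacent to $\Sigma$ to polygons adjacent to its image and remains rigid outside $\Sigma\cup H$ — and both verifications you give are sound.
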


\emph{The height-orientation} is obtained by giving an orientation to all the edges of  $\Cp(A_{n,m})$, resp. $\Dp(A_{n,m})$, from lower height to higher height. Given a vertex $x$, we say that an incident edge of x \emph{points outwards} if it is oriented from $x$ to another vertex; otherwise, it \emph{points towards} $x$. The \emph{height of a vertex} $x=[\Sigma, \phi]$, denoted by $h(x)$, refers to the height of the admissible surface $\Sigma$.

\begin{lemma}\label{case2cube}
    Let $S$ be a $2$-cube in $\Cp(A_{n,m})$, resp. $\Dp(A_{n,m})$. Then $S$ has a unique height-orientation. More precisely, one of the vertices has some height $h \in \Z_{>0}$, its two adjacent vertices have height $h+1$ and the remaining vertex has height $h+2$. 
\end{lemma}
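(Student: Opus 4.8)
The plan is to analyze the four vertices of a $2$-cube directly using the combinatorial description of cubes in $\Cp(A_{n,m})$ (resp. $\Dp(A_{n,m})$). Recall that a $2$-cube arises from a vertex $[\Sigma,\phi]$ together with two distinct adjacent polygons $H_1, H_2$ to $\Sigma$, giving the four vertices $[\Sigma,\phi]$, $[\Sigma\cup H_1,\phi]$, $[\Sigma\cup H_2,\phi]$, $[\Sigma\cup H_1\cup H_2,\phi]$. First I would observe that adding a single adjacent polygon to an admissible surface increases the height by exactly one: a polygon contains exactly one puncture (resp. one vertex) in its interior, so $h(\Sigma\cup H)=h(\Sigma)+1$ whenever $H$ is adjacent to $\Sigma$ and not contained in it. Applying this twice gives $h(\Sigma\cup H_1) = h(\Sigma\cup H_2) = h(\Sigma)+1$ and $h(\Sigma\cup H_1\cup H_2) = h(\Sigma)+2$. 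Setting $h := h(\Sigma)$, this already yields the height values claimed in the statement, and in the $\Dp$ case one notes $h \geq 0$ while the inequality $h \in \Z_{>0}$ stated is for $\Cp$ where every admissible surface contains at least one puncture; I would phrase this carefully (or just say $h \geq 0$ uniformly, depending on convention).

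Next I would check that these height assignments are well-defined on equivalence classes, i.e. that $h$ descends to vertices of the cube complex. This is immediate since the equivalence relation $(\Sigma,\phi)\sim(\Sigma',\phi')$ requires a homeomorphism mapping $\Sigma$ to $\Sigma'$, which preserves the number of punctures (resp. vertices), hence preserves height. Then, since the four heights $h, h+1, h+1, h+2$ are pairwise comparable along every edge of the square (each edge of the cube connects a surface to one obtained by adding one polygon, so the two endpoints differ in height by one), the height-orientation orients each of the four edges unambiguously: from $[\Sigma,\phi]$ the two incident edges go out, into $[\Sigma\cup H_i,\phi]$ the edge from $[\Sigma,\phi]$ goes in and the edge to $[\Sigma\cup H_1\cup H_2,\phi]$ goes out, and into the top vertex both edges go in. This is visibly the unique orientation consistent with the height function, proving uniqueness.

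The only genuine subtlety — and the step I expect to be the main obstacle — is ruling out the degenerate possibility that the four "vertices" of the putative $2$-cube are not actually four distinct vertices of the complex, or that the square could be attached in a way that makes the labelling of edges by "add $H_1$" versus "add $H_2$" ambiguous; in other words, I must make sure the square is genuinely a $2$-cube and that $H_1 \neq H_2$ forces $[\Sigma\cup H_1,\phi] \neq [\Sigma\cup H_2,\phi]$. This follows because $H_1$ and $H_2$ are distinct polygons adjacent to $\Sigma$, so $\Sigma \cup H_1$ and $\Sigma \cup H_2$ are distinct admissible surfaces of the same height, and any homeomorphism realizing an equivalence between the corresponding pairs would have to fix $\Sigma$ and send $H_1$ to $H_2$ while being rigid outside $\Sigma$ — but a rigid homeomorphism sends polygons to polygons bijectively and, being required to map $\Sigma$ to $\Sigma$, cannot swap two distinct polygons adjacent to $\Sigma$ while fixing $\Sigma$ pointwise up to isotopy in the relevant sense. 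Once this distinctness is established, the $2$-cube has the four expected distinct vertices and the argument above applies verbatim.
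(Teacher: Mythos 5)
Your proposal proves only a weakened reading of the statement and misses the lemma's actual content. You assume from the start that the $2$-cube is handed to you with its defining presentation $[\Sigma,\phi]$, $[\Sigma\cup H_1,\phi]$, $[\Sigma\cup H_2,\phi]$, $[\Sigma\cup H_1\cup H_2,\phi]$; with that assumption the height pattern $h,h+1,h+1,h+2$ is immediate, and the only thing left is the distinctness of the vertices, which is where you spend your effort. But the lemma is applied in the paper to squares that are \emph{not} known beforehand to be of this form: in Lemma \ref{lemmaK23} the relevant $4$-cycles come from a hypothetical copy of $K_{2,3}$ in the $1$-skeleton, and in the claim inside Proposition \ref{cubecomp} the $2$-cubes live in the cube completion $\Gamma_{A_{n,m}}^{\square}$ of the underlying graph, whose squares are a priori just $4$-cycles of the graph. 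For such a $4$-cycle the only a priori information is that the height changes by $\pm 1$ along each edge (Lemma \ref{lemma3.4}), so besides the desired pattern (one minimum, two middle vertices, one maximum) there is an alternating pattern in which two opposite vertices are both local minima (Case 1 of Figure \ref{Fig02}). Excluding that pattern is the whole point of the lemma, and your argument never addresses it; under your reading the lemma is essentially contentless and the later proofs that cite it would be unsupported.

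The paper excludes the alternating pattern as follows: write the two bottom vertices as $[\Sigma,\id]$ and $[\Gamma,\pi]$ and apply Lemma \ref{lemma3.4} along both edges into each of the two top vertices. This gives $[\Sigma\cup H_1,\id]=[\Gamma\cup I_1,\pi]$ and $[\Sigma\cup H_2,\id]=[\Gamma\cup I_2,\pi]$, so a representative of $\pi$ maps $\Gamma\cup I_1$ to $\Sigma\cup H_1$ and is rigid outside $\Gamma\cup I_1$, and likewise maps $\Gamma\cup I_2$ to $\Sigma\cup H_2$ and is rigid outside $\Gamma\cup I_2$. Comparing the two constraints forces $\pi(I_i)=H_i$ for $i=1,2$ and $\pi(\Gamma)=\Sigma$ with the required rigidity, hence $[\Gamma,\pi]=[\Sigma,\id]$, contradicting the distinctness of the two bottom vertices of the square. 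Some version of this argument must appear in any proof of the lemma as it is used. Your observations that $h$ is well defined on equivalence classes and that $[\Sigma\cup H_1,\phi]\neq[\Sigma\cup H_2,\phi]$ are correct but peripheral to this missing step.
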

\begin{proof}
      We work with $\Cp(A_{n,m})$, the proof remains exactly the same for $\Dp(A_{n,m})$. Give an orientation to each edge of $S$ with respect to the height, this brings us to consider four cases in Figure~\ref{Fig02}. \begin{figure}[H]
    \centering
    \includegraphics[scale=0.2]{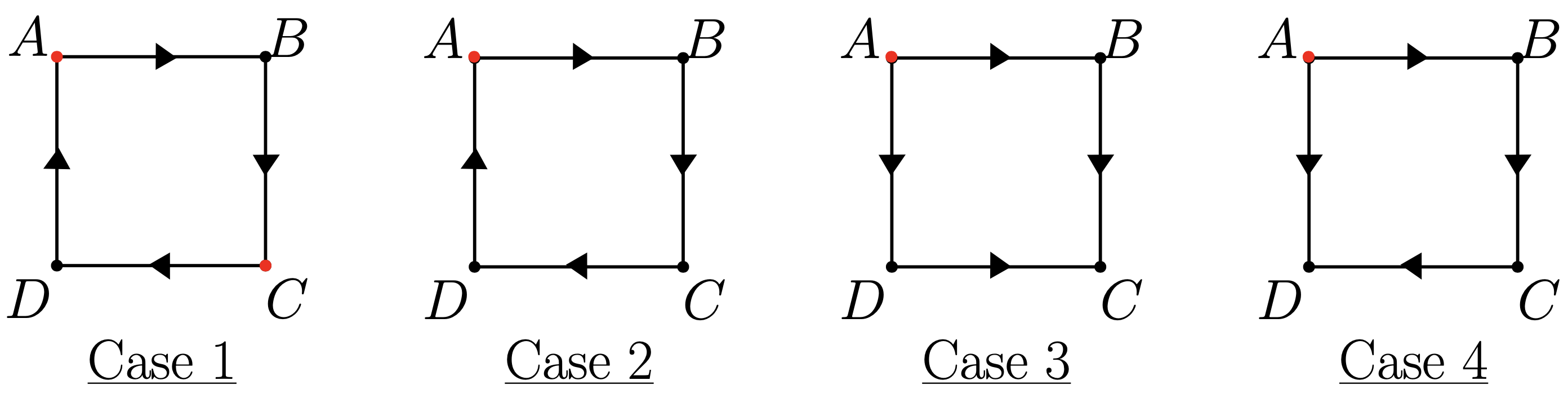}
    \caption{ }
    \label{Fig02} \end{figure}
Applying Lemma \ref{lemma3.4}, starting from the  vertex A, allows us to keep only the third case which is a square of the desired form. Indeed, the second and fourth cases are excluded because of a contradiction with the height of the vertices. Then consider the first case, we assume that the vertex $A$ (resp. $C$) is $[\Sigma,\id]$ (resp.  $[\Gamma,\pi]$). On one hand, by Lemma \ref{lemma3.4} applied from $A$ to $B$ and from $C$ to $B$, one has, $[\Sigma \cup H_1,\id]=[\Gamma \cup I_1,\pi]$ for some polygons $H_1$ and $I_1$ adjacent to $\Sigma$ and $\Gamma$ respectively. Hence, $\pi(\Gamma \cup I_1)=\Sigma \cup H_1$ and $\pi$ is rigid outside $\Gamma \cup I_1$. On the other hand, by Lemma \ref{lemma3.4} applied from $A$ to $D$ and from $C$ to $D$, one has, $[\Sigma \cup H_2,\id]=[\Gamma \cup I_2,\pi]$. By definition, $\pi(\Gamma \cup I_2)=\Sigma \cup H_2$ and $\pi$ is rigid outside $\Gamma \cup I_2$. Thus $\pi(I_i)=H_i$ for $i=1,2$. Consequently, $[\Gamma, \pi]=[\Sigma, \id]$, which is a contradiction.
\end{proof}

\begin{cor}\label{heightcube}
    Any $3$-cube must have a specific orientation with respect to the height function. In particular, it must have one smallest vertex $x$, then three vertices of height $h(x) + 1$, three vertices of height $h(x) + 2$, and one maximal vertex of height $h(x)+3$. 
\end{cor}

Let $\Gamma$ be a graph, its cube-completion $\Gamma^\square$ is the cube complex obtained from $\Gamma$ by filling every sub-graph isomorphic to the 1-skeleton of a cube with a cube of the corresponding dimension. The cube complex $\Cp(A_{n,m})$ is the cube completion of its underlying graph. This is the content of the following proposition.

\begin{proposition}\label{cubecomp}
    Let $\Gamma_{A_{n,m}}$ be the underlying graph of $\Cp(A_{n,m})$, resp. $\Dp(A_{n,m})$. Then $\Gamma_{A_{n,m}}^{\square}=\Cp(A_{n,m})$ (resp. $\Dp(A_{n,m})$).
\end{proposition}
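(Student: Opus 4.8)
The plan is to prove that $\Cp(A_{n,m})$ and $\Gamma_{A_{n,m}}^{\square}$ (resp. $\Dp(A_{n,m})$ and $\Gamma_{A_{n,m}}^{\square}$) have exactly the same cells. One inclusion is immediate: by construction every $k$-cube of the complex is glued along a subgraph isomorphic to the $1$-skeleton of a $k$-cube, so the complex already sits inside $\Gamma_{A_{n,m}}^{\square}$ as a subcomplex with the same $1$-skeleton. The content is the converse — that every subgraph $Q \subseteq \Gamma_{A_{n,m}}$ isomorphic to the $1$-skeleton of a $k$-cube is already the $1$-skeleton of a genuine $k$-cube of the complex, i.e. is of the form $\left\{\, [\Sigma \cup \bigcup_{i\in I}H_i,\phi] \;\middle|\; I \subseteq \{1,\dots,k\} \,\right\}$ for a vertex $[\Sigma,\phi]$ and distinct polygons $H_1,\dots,H_k$ adjacent to $\Sigma$, with edges joining the classes whose index sets differ by one element.

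First I would record that two adjacent vertices of $\Gamma_{A_{n,m}}$ have heights differing by exactly $1$: along an edge $[\Gamma,\psi]\sim[\Gamma\cup H,\psi]$ one has $h(\Gamma\cup H) = h(\Gamma) + h(H)$, and a polygon $H$ adjacent to an admissible surface has height $1$ — it carries a single puncture in the $\Cp$-formalism, and in the $\Dp$-formalism it carries a single tree-vertex and is distinct from the central polygon, which already lies in the surface. I would then fix $Q$ and identify its vertices with $\{0,1\}^{k}$ so that edges join Hamming-neighbours, and for a direction $i$ set $\epsilon_i(v) = h(v^{+i}) - h(v) \in \{\pm1\}$, defined when $v_i = 0$, where $v^{+i}$ flips the $i$-th coordinate to $1$. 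Running the argument of the proof of Lemma \ref{case2cube} — which only invokes Lemma \ref{lemma3.4} — on each $2$-face of $Q$ shows that it cannot carry the alternating cyclic height pattern $a,a+1,a,a+1$; a short check then forces $\epsilon_i(v^{+j}) = \epsilon_i(v)$ for all $j \neq i$. Hence each $\epsilon_i$ is constant, $h$ restricted to $Q$ is integer-affine with all slopes equal to $\pm1$, and therefore it attains its minimum at a \emph{unique} vertex $x_0$, which is the unique source of the height-orientation of $Q$.

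With $x_0 = [\Sigma,\phi]$, its $k$ neighbours in $Q$ all have height $h(\Sigma)+1$, so by Lemma \ref{lemma3.4} they are $[\Sigma\cup H_1,\phi],\dots,[\Sigma\cup H_k,\phi]$ with the $H_i$ distinct polygons adjacent to $\Sigma$. I would then show, by induction on $|I|$, that the vertex $v_I$ of $Q$ at Hamming-position $I$ equals $[\Sigma\cup\bigcup_{i\in I}H_i,\phi]$; the cases $|I|\leq 1$ are the above. For $|I|\geq 2$, the lower neighbours of $v_I$ in $Q$ are exactly the $v_{I\setminus\{i\}}$ with $i\in I$, which by induction equal $[\Gamma_i,\phi]$ with $\Gamma_i = \Sigma\cup\bigcup_{j\in I\setminus\{i\}}H_j$; applying Lemma \ref{lemma3.4} from $[\Gamma_i,\phi]$ to $v_I$ gives $v_I = [\Gamma_i\cup K_i,\phi]$ for some polygon $K_i$. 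Comparing the expressions for $v_I$ coming from two distinct indices $i_1\neq i_2$, and using that a class $[\Theta,\phi]$ determines the surface $\Theta$, one gets $\Gamma_{i_1}\cup K_{i_1} = \Gamma_{i_2}\cup K_{i_2}$ as unions of polygons; since $H_{i_1}$ lies in the right-hand side but is not in $\Gamma_{i_1}$ and differs from $H_{i_2}$, it must be $K_{i_1}$, whence $v_I = [\Sigma\cup\bigcup_{i\in I}H_i,\phi]$. This identifies the vertices and edges of $Q$ with those of a genuine $k$-cube of the complex, so $Q$ is the $1$-skeleton of such a cube; as the two complexes then share their $1$-skeleton and their collection of filled cubes, they coincide.

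The only genuinely delicate input is the fact used in the induction: $[\Theta,\phi] = [\Theta',\phi]$ forces $\Theta = \Theta'$. Unravelling the equivalence relation, this says that a homeomorphism isotopic to the identity which is rigid off $\Theta$ and carries $\Theta$ to $\Theta'$ must satisfy $\Theta' = \Theta$; this is exactly where the rigidity of the structure enters, since such a homeomorphism fixes every arc of the rigid structure up to isotopy and an admissible surface is recovered from its frontier arcs. Everything else is bookkeeping, and the argument runs verbatim for $\Dp(A_{n,m})$: the central polygon lies in every admissible surface considered there, so adjacent polygons still have height $1$, and Lemmas \ref{lemma3.4} and \ref{case2cube} are available for $\Dp(A_{n,m})$ as well.
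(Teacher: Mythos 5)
Your proposal is correct and takes essentially the same route as the paper: one inclusion by definition, then Lemma \ref{lemma3.4} together with the $2$-cube argument of Lemma \ref{case2cube} to show the height function on a cube of $\Gamma_{A_{n,m}}^{\square}$ has a unique minimum $[\Sigma,\phi]$, after which every vertex is identified as $[\Sigma\cup\bigcup_{i\in I}H_i,\phi]$. The only differences are cosmetic: you obtain the height formula by transporting the edge increments across $2$-faces rather than by induction on the dimension, and you spell out the fact (used implicitly in the paper's ``base case $n=2$'' step) that $[\Theta,\phi]=[\Theta',\phi]$ forces $\Theta=\Theta'$.
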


\begin{proof}
We work with $\Cp(A_{n,m})$ but the proof remains exactly the same for $\Dp(A_{n,m})$. The inclusion $\Cp(A_{n,m}) \subset \Gamma_{A_{n,m}}^{\square}$ follows from the definitions. Let $S$ be a $n$-cube in $\Gamma_{A_{n,m}}^{\square}$. We show by induction that its vertices form an $n$-cube in $\Cp(A_{n,m})$ as well. In other words, we need to show that its vertices have the form $\{[\Sigma\cup\bigcup_{i\in I}H_i,\phi]\;\mid\;I\subset\{1,\dots,n\}\}$ for some admissible surface $\Sigma$, an asymptotically rigid homeomorphism $\phi$, and distinct polygons $H_1,\dots,H_n$ adjacent to $\Sigma$. 

\begin{claim}
    We label the vertices of $S$ by bit strings $\underline{e}=e_1\dots e_n$ with $e_i\in\{0,1\}$ so that the height function $h$ attains a mimimum at $\underline{0}=0\dots 0$ (and two adjacent vertices differ by one bit). Then, $h(\underline{e})=h(\underline{0})+\textstyle\sum_{i=1}^ne_i$ for each $\underline{e}$. In particular, the function $h$ has a unique global minimum and a unique global maximum on the vertices of $S$.
\end{claim}
\begin{proof}[Proof of the claim]
We show this by induction on $n$. For $n=1$, this is the content of Lemma \ref{lemma3.4}. We will need the base case $n=2$ (treated by Lemma \ref{case2cube}) in the proof. Observe that, along any edge, the height must increase by $1$ or decrease by $1$. Assume that it holds for any $(n-1)$-cube and consider the $n$-cube $S$.

Pick a vertex in $S$ with minimal height and give it the label $\underline{0}=0\dots 0$. Label the rest of the cube coherently. Any vertex of $S$ except $\underline{1}=1\dots 1$ shares an $(n-1)$-cube with $\underline{0}$, thus by induction hypothesis we have $h(\underline{e})=h(\underline{0})+\textstyle\sum_{i=1}^ne_i$ for all $\underline{e}\neq\underline{1}$. In particular, all the neighbours of $\underline{1}$ have height $h(\underline{0})+(n-1)$. Thus $\underline{1}$ must have height $h(\underline{0})+n$ or $h(\underline{0})+(n-2)$. The latter case cannot happen. Indeed it would mean that one can find a $2$-cube called $S'$ containing $\underline{1}$, two of its neighbours and a neighbour of them so that we would have two points with maximal height in $S'$, as in Case 1 of Figure \ref{Fig02}. Thus $\underline{1}$ has height $h(\underline{0})+n$ and we are done.
\end{proof}

Fix a labelling of the vertices of $S$ as in the above claim. Then, by Lemma \ref{lemma3.4}, the $n$ neighbours of $\underline{0}$ must correspond to the attachment of $n$ distinct polygons to $\Sigma$. More precisely, the vertex with the $1$ in the $i$-th entry has the form $[\Sigma\cup H_i,\phi]$, and the polygons $H_1,\dots,H_n$ are all distinct and adjacent to $\Sigma$.

Now, the vertices with exactly two $1$'s in their label are also determined by the above. For example, the vertex with $1$'s in entries $i$ and $j$ must have the form $[\Sigma\cup H_i\cup H_j,\phi]$. This follows from the base case $n=2$. By induction,  we are able to identify all vertices in the cube $S$ (each time using the base case $n=2$). More precisely, the vertex whose label has $1$'s in entries $i_1,\dots,i_k$ must have the form $[\Sigma\cup H_{i_1}\cup\dots\cup H_{i_k},\phi]$. This concludes the proof.
\end{proof}
     
\section{Studying the \texorpdfstring{$\CAT$}{TEXT}ness of the cube complexes \texorpdfstring{$\Cp(A_{n,m})$}%
     {TEXT} and \texorpdfstring{$\Dp(A_{n,m})$}%
     {TEXT}}
 
In the first part of this section, we derive some properties on $\Cp(A_{n,m})$ and $\Dp(A_{n,m})$ in order to deduce, in the second part, under which condition $\Cp(A_{n,m})$ is $\CAT$. Finally, in the third part we study the $\CAT$ness of $\Dp(A_{n,m})$.  

Recall that a cube complex is said to be $\CAT$ if it is simply connected and the link of every vertex is \emph{flag}. In the context of a cube complex $X$, a \emph{$1$-corner} refers to a square with two adjacent edges identified. Similarly, a \emph{$2$-corner} consists of two squares that share two consecutive edges. Hence, having a \emph{$2$-corner} implies that the $1$-skeleton contains a copy of the bipartite graph $K_{3,2}$. A \emph{$3$-corner} in $X$ consists of $3$ pairwise distinct squares that share a vertex, pairwise share an edge, and are not contained in a $3$-cube of $X$. The common vertex of these squares is referred to as the \emph{root of the $3$-corner}.

The next proposition provides a tool to show the fact that a cube complex is $\CAT$. It refers to Theorem 3.3.1 in \cite{BookMedian} and Theorem 6.1 in \cite{Chep}.

\begin{proposition}\label{Prop2}
Let $X$ be a connected graph. Assume that:\begin{enumerate}
    \item\label{hyp1} the cube completion $X^{\square}$ is simply connected,
    \item\label{hyp2}  $X$ has no $1$-corner,
    \item\label{hyp3}  $X^{\square}$ also satisfies the \emph{$3$-cube condition} (every $3$-corner must span a $3$-cube),
    \item\label{hyp4}  $X$ does not contain a copy of the complete bipartite graph $K_{3,2}$.
\end{enumerate}
Then, $X^{\square}$ is a $\CAT$ cube complex.
\end{proposition}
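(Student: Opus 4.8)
The plan is to use the definition of $\CAT$ recalled just before the statement: a cube complex is $\CAT$ exactly when it is simply connected and the link of each of its vertices is flag. Hypothesis \ref{hyp1} says that $X^{\square}$ is simply connected, so the whole task reduces to showing that for every vertex $v$ of $X^{\square}$ the link $\Lk(v)$ is a flag simplicial complex. Here the vertices of $\Lk(v)$ are the edges of $X$ incident to $v$, an $(n-1)$-simplex of $\Lk(v)$ is the corner at $v$ of an $n$-cube of $X^{\square}$, and ``flag'' means that $\Lk(v)$ is a simplicial complex in which every complete subgraph of its $1$-skeleton spans a simplex.

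First I would verify that each $\Lk(v)$ is an honest simplicial complex. A square of $X^{\square}$ through $v$ contributes an edge of $\Lk(v)$ joining its two sides at $v$; such a square with its two sides at $v$ identified is precisely a $1$-corner, so hypothesis \ref{hyp2} rules out loops in $\Lk(v)$. If two distinct squares through $v$ shared both of their sides at $v$, then — as recalled before the statement — the five vertices involved would contain a copy of $K_{3,2}$, contradicting hypothesis \ref{hyp4}; hence any two vertices of $\Lk(v)$ are joined by at most one edge. By Proposition \ref{cubecomp} every cube of $X^{\square}$ is determined by its $1$-skeleton, so a given family of edges at $v$ bounds at most one cube, which pins down the higher simplices of $\Lk(v)$ by their vertex sets. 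Thus $\Lk(v)$ is a simplicial complex.

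Next I would prove the flag condition — every clique of the $1$-skeleton of $\Lk(v)$ spans a simplex — by induction on the size $k$ of the clique. The case $k=2$ is vacuous. For $k=3$, a triangle of $\Lk(v)$ is a triple of pairwise distinct squares through $v$ that pairwise share an edge, and by definition such a triple is either contained in a $3$-cube, so that the triangle bounds a $2$-simplex, or it is a $3$-corner rooted at $v$, which hypothesis \ref{hyp3} forbids; hence every triangle of $\Lk(v)$ is filled. For $k\geq 4$, given $k$ edges at $v$ that pairwise span squares, the inductive hypothesis supplies a $(k-1)$-cube on every $(k-1)$-element subfamily, and one must see that these close up into a $k$-cube. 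This is exactly what Theorem 3.3.1 in \cite{BookMedian} and Theorem 6.1 in \cite{Chep} provide: in a simply connected cube complex the $3$-cube condition forces all the higher ``$n$-cube conditions'', so no higher-dimensional corner can occur and the required $k$-cube exists. Consequently every clique of $\Lk(v)$ spans a simplex, so $\Lk(v)$ is flag.

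I expect this last step to be the main obstacle: upgrading the $3$-cube condition to the absence of higher-dimensional corners. Rather than reprove it — which would amount to redoing Chepoi's minimal-disc-diagram argument — I would quote it from the references above, checking only that hypotheses \ref{hyp1}--\ref{hyp4} translate into their hypotheses (simple connectivity, simpliciality of links, and the $3$-cube condition). Once every vertex link has been shown to be flag, $X^{\square}$ is simply connected with flag links, hence $\CAT$ by the very definition, which finishes the proof.
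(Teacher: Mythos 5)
Your proposal is correct and follows essentially the same route as the paper, which offers no proof of this proposition at all but simply defers to Theorem 3.3.1 of \cite{BookMedian} and Theorem 6.1 of \cite{Chep} — exactly the step where you also quote these references — while your preliminary translations (hypotheses \ref{hyp2} and \ref{hyp4} give simplicial links, hypothesis \ref{hyp3} fills the triangles) are accurate and only make explicit what the citation is being used for. One small correction: the fact that a cube of $X^{\square}$ is determined by its $1$-skeleton is immediate from the definition of the cube completion of an arbitrary graph, not a consequence of Proposition \ref{cubecomp}, which concerns only $\Cp(A_{n,m})$ and $\Dp(A_{n,m})$.
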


By definition, for all $n,m\geq 1,$ there is no $1$-corner in the $1$-skeleton of $\Cp(A_{n,m})$ and $\Dp(A_{n,m})$, respectively. 
 
\begin{lemma}\label{lemmaK23}
    There is no copy of the bipartite graph $K_{2,3}$ in the $1$-skeleton of both $\Cp(A_{n,m})$ and $\Dp(A_{n,m})$.
\end{lemma}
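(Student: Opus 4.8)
The plan is to argue by contradiction: suppose the $1$-skeleton of $\Cp(A_{n,m})$ (resp. $\Dp(A_{n,m})$) contains an embedded copy of $K_{2,3}$, with parts $\{x, y\}$ and $\{u_1, u_2, u_3\}$, so that each $u_i$ is adjacent to both $x$ and $y$ while $x$ and $y$ are not adjacent. First I would analyze the height function $h$ along this subgraph. Since consecutive vertices along an edge differ in height by exactly $1$ (by Lemma \ref{lemma3.4}), and $x, y$ are at distance $2$ through each $u_i$, the heights $h(x)$ and $h(y)$ have the same parity; moreover for each $i$, the vertex $u_i$ is either strictly above or strictly below both $x$ and $y$, or it sits at height $\tfrac{h(x)+h(y)}{2}$ between them. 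A clean first reduction: show that $h(x) = h(y)$ is impossible, because then each $u_i$ would have height $h(x)\pm 1$, and two of the three $u_i$ would have the same height and lie on a common ``$2$-path'' pattern; inspecting the $2$-cube structure via Lemma \ref{case2cube} forces a square whose height-orientation is forbidden (Case 1 of Figure \ref{Fig02}). So we may assume, after possibly swapping $x$ and $y$, that $h(y) = h(x) + 2$ and hence every $u_i$ has height $h(x)+1$, i.e. $y = [\Sigma \cup H_i \cup H_i', \phi]$-type configurations sit above each $u_i$.

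The core of the argument is then a local combinatorial analysis at the vertex $x$. Writing $x = [\Sigma, \phi]$, Lemma \ref{lemma3.4} applied to the three edges $x \to u_i$ gives $u_i = [\Sigma \cup H_i, \phi]$ for polygons $H_1, H_2, H_3$ adjacent to $\Sigma$. These must be pairwise distinct, for if $H_i = H_j$ then $u_i = u_j$, contradicting that the three $u_i$ are distinct. Now apply Lemma \ref{lemma3.4} to each edge $u_i \to y$: since $h(y) = h(u_i) + 1$, we get $y = [\Sigma \cup H_i \cup K_i, \phi]$ for some polygon $K_i$ adjacent to $\Sigma \cup H_i$. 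Comparing the three expressions for the single vertex $y$, and using that the equivalence class $[\Sigma', \phi]$ determines $\Sigma'$ up to the action of $\phi$ (so that two such classes with the same $\phi$ coincide iff the surfaces coincide — this is the observation "$(\Sigma,\phi)\sim(\Sigma,\phi')$ iff $\phi\simeq\phi'$" combined with the definition of the equivalence relation), we conclude $\Sigma \cup H_1 \cup K_1 = \Sigma \cup H_2 \cup K_2 = \Sigma \cup H_3 \cup K_3$ as admissible subsurfaces. Call this common surface $\Sigma''$; it has $h(\Sigma'') = h(\Sigma) + 2$, so $\Sigma'' \setminus \Sigma$ consists of exactly two polygons. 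But $\Sigma'' \setminus \Sigma$ must contain $H_1, H_2, H_3$ (since each $H_i \subseteq \Sigma''$ and $H_i \not\subseteq \Sigma$), three distinct polygons inside a two-polygon set — contradiction.

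The step I expect to be the main obstacle is making rigorous the claim that equality of equivalence classes $[\Sigma_1, \phi] = [\Sigma_2, \phi]$ (same homeomorphism) implies $\Sigma_1 = \Sigma_2$ as subsurfaces, and more generally pinning down exactly what data a vertex of $\Cp$ or $\Dp$ remembers; one has to unwind the definition of $\sim$ carefully, noting that $\phi'^{-1}\phi = \id$ is isotopic to an asymptotically rigid homeomorphism rigid outside $\Sigma_1$ sending $\Sigma_1$ to $\Sigma_2$ forces $\Sigma_1 = \Sigma_2$ up to isotopy. A secondary subtlety is the parity/height bookkeeping in the first reduction: one must check that the $u_i$ cannot straddle $x$ and $y$ at an intermediate height when $h(x) \neq h(y)$, which again reduces to the forbidden orientations of $2$-cubes from Lemma \ref{case2cube}. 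Once these local facts are in place the contradiction is immediate, and since all the cited lemmas (\ref{lemma3.4}, \ref{case2cube}) hold verbatim for $\Dp(A_{n,m})$, the same proof covers both complexes simultaneously.
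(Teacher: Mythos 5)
Your proposal is correct and follows essentially the same route as the paper: you use the height function and the forbidden alternating orientation of $4$-cycles (the argument of Lemma \ref{case2cube}) to reduce to the configuration with one source $x$ of height $h$, the three $u_i$ at height $h+1$, and one sink $y$ at height $h+2$ — this is exactly the paper's ``Case 6'' — and then apply Lemma \ref{lemma3.4} to force three distinct polygons $H_1,H_2,H_3$ into the two-polygon difference $\Sigma''\smallsetminus\Sigma$, the same contradiction the paper derives ($H_1=H_2=H_3$). The only differences are presentational (parity bookkeeping instead of enumerating the eight orientation diagrams, and an explicit remark that $[\Sigma_1,\phi]=[\Sigma_2,\phi]$ forces $\Sigma_1=\Sigma_2$, which the paper uses implicitly).
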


\begin{proof}
    We work with $\Cp(A_{n,m})$ but the proof is the same for $\Dp(A_{n,m})$. Assume by contradiction that $K_{2,3}$ exists in the $1$-skeleton of $\Cp(A_{n,m})$ and give the height-orientation to each edge. This leads us to consider eight cases in Figure \ref{Fig3.0}.
     \begin{figure}[h]
    \centering
    \includegraphics[scale=0.3]{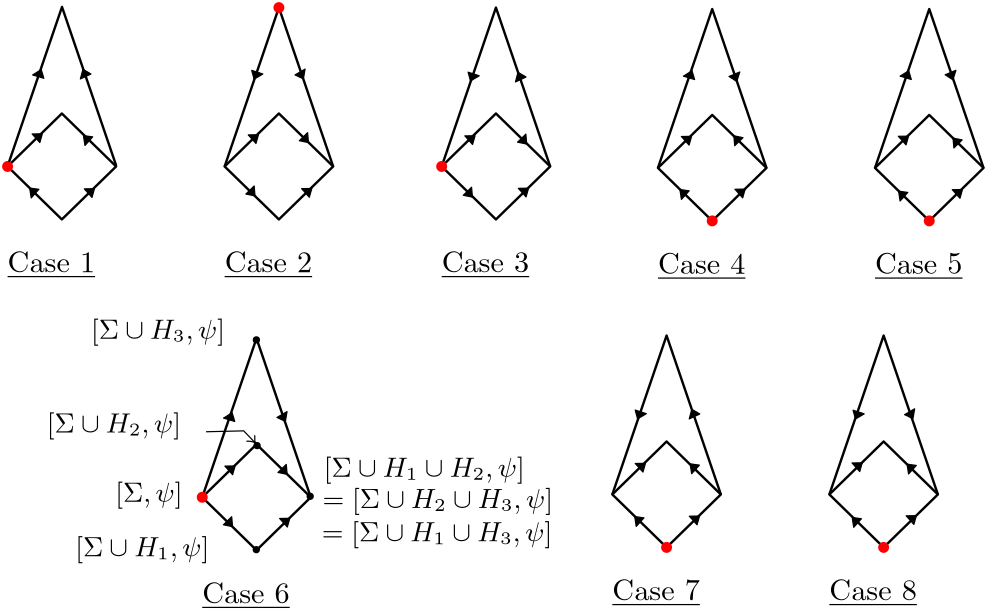}
    \caption{ }
    \label{Fig3.0} \end{figure} 
    
     By Lemma \ref{case2cube}, it remains to study Case 6. We mark with a red dot the vertices from which all incident edges point outwards. Let's assume the red point is $[\Sigma,\psi]$. By Lemma \ref{lemma3.4}, its adjacent vertices are $[\Sigma \cup H_1, \psi]$, $[\Sigma \cup H_2, \psi]$, and $[\Sigma \cup H_3, \psi]$, where $H_1$, $H_2$, and $H_3$ are some polygons adjacent to $\Sigma$. Then, by Lemma \ref{lemma3.4}, the remaining vertex, whose incident edges all point towards it, is $[\Sigma \cup H_1 \cup H_2, \psi] = [\Sigma \cup H_2 \cup H_3, \psi] = [\Sigma \cup H_1 \cup H_3, \psi]$. Consequently, $H_1=H_2=H_3$, which contradicts the distinctness of the vertices.
\end{proof}

\subsection{Completing roots of $3$-corners}
In this subsection, we study the $3$-cube condition in $\Cp(A_{n,m})$ and $\Dp(A_{n,m})$. A root of a $3$-corner is defined as an \emph{attracting root} if all its incident edges point towards it. A root with at least one incident edge pointing outwards is referred to as a \emph{non-attracting} root. The main obstacle for $\Cp(A_{n,m})$ to be $\CAT$ comes from the absence of polygons of height zero in the rigid structure. Indeed, there exists 3-corners with three vertices of height $1$, three of height $2$, and one of height $3$. However, by the Corollary \ref{heightcube} the only way to complete such a 3-corner into a 3-cube would be to add a vertex of height $0$. Hence in the absence of polygons of height zero, 3-corners exist and the $\CAT$ property cannot be verified by Proposition \ref{Prop2}. By construction $\Dp(A_{n,m})$ contains a polygon of height zero, the central polygon, which avoids the presence of 3-corners.
    
\begin{lemma}\label{lem3cornerA}
    Every $3$-corner in $\Cp(A_{n,m})$, resp. $\Dp(A_{n,m})$, having a non-attracting root can be completed into a $3$-cube. 
\end{lemma}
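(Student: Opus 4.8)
The plan is to exploit the height-orientation structure established in Lemmas~\ref{lemma3.4}, \ref{case2cube}, and \ref{lemmaK23}, and reduce the statement to a combinatorial analysis of where the root sits relative to the three squares. Let $r$ be the root of a $3$-corner, with the three squares $S_1, S_2, S_3$ pairwise sharing an edge at $r$. Since $r$ is non-attracting, at least one incident edge goes out. I would first observe that each square $S_i$, by Lemma~\ref{case2cube}, has a unique height-orientation of the ``lowest vertex $\to$ two middle vertices $\to$ top vertex'' shape, so the local picture at $r$ is heavily constrained: the edges of the three squares incident to $r$ use at most three distinct neighbours of $r$ (call them the ``axis edges''), and the height changes by $\pm1$ along each.

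The key step is a case split according to the number of axis edges at $r$ that go out (equivalently, the position of $r$ within each square). Write $x = [\Sigma,\phi]$ for the root. If all three axis edges go out, then by Lemma~\ref{lemma3.4} the three neighbours are $[\Sigma\cup H_1,\phi]$, $[\Sigma\cup H_2,\phi]$, $[\Sigma\cup H_3,\phi]$ for polygons $H_i$ adjacent to $\Sigma$; these must be distinct (else two squares coincide or we get a $K_{2,3}$, contradicting Lemma~\ref{lemmaK23}), and then the three ``diagonal'' vertices of the squares are forced to be $[\Sigma\cup H_i\cup H_j,\phi]$ and the missing eighth vertex must be $[\Sigma\cup H_1\cup H_2\cup H_3,\phi]$, which is a genuine vertex with $H_1,\dots,H_3$ adjacent to $\Sigma$ — so the $3$-cube closes up. If exactly two axis edges go out and one goes in, I would run essentially the same argument after first using Lemma~\ref{lemma3.4} in the incoming direction to rewrite $x$ as $[\Gamma\cup H_0,\psi]$ with $\Gamma$ lower; translating, the three squares all become ``attaching a polygon'' squares based at a common lower vertex, and the same boundedness/distinctness argument produces the $3$-cube. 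The subtle sub-case is exactly one axis edge going out: here I expect to need the planarity of the arboreal surface — two of the three squares are ``going down'' from $r$ and share the outgoing edge, and I would argue that their lower vertices together with $r$ determine a configuration that forces the two removed polygons to be adjacent to a common admissible surface, again closing the cube; if this cannot happen geometrically I would show it is ruled out by Lemma~\ref{case2cube} applied to an auxiliary $2$-cube, as in the proof of Proposition~\ref{cubecomp}.

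I would organize the write-up by reducing everything, via Lemma~\ref{lemma3.4} and a change of basepoint $[\Sigma,\phi]\mapsto[\phi^{-1}(\text{something}),\cdot]$, to the canonical situation where the root is $[\Sigma,\phi]$ and all relevant edges point outward — i.e. all three squares are of the form ``union with an adjacent polygon.'' This is legitimate precisely because the root is non-attracting, so there is at least one outgoing direction to pivot around; the main work is checking that after such a pivot all three squares simultaneously take the desired form, which is where Lemma~\ref{case2cube} (forcing the shape of each square) does the heavy lifting. Once in canonical form, the argument is the $n=3$ instance of the inductive identification already carried out in the proof of Proposition~\ref{cubecomp}: distinct adjacent polygons $H_1,H_2,H_3$, all sub-unions are vertices, and the full $3$-cube is present.

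The main obstacle I anticipate is the bookkeeping in the mixed case (some axis edges in, some out): one must be careful that after pivoting to put the root at the bottom, the three squares genuinely share only the root-vertex pattern of a $3$-corner and do not secretly coincide or form a $K_{2,3}$ — so Lemma~\ref{lemmaK23} and the distinctness of the squares have to be invoked at just the right moment. A secondary subtlety is that in $\Dp(A_{n,m})$ the central polygon has height zero and admissible surfaces must contain it, so one should check the pivot never tries to remove the central polygon; but since a non-attracting root has height at least $1$ and we only ever pivot downward by one step along an incoming edge while keeping the central polygon inside, this causes no problem, and the proof for $\Dp(A_{n,m})$ is verbatim the same as for $\Cp(A_{n,m})$.
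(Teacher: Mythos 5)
Your overall strategy --- splitting according to how many of the three axis edges at the root go out, and using Lemmas~\ref{lemma3.4} and~\ref{case2cube} to identify the corner's vertices as unions of adjacent polygons over a vertex of low height --- is the same as the paper's (which distinguishes exactly these three configurations), and your treatment of the all-outgoing case coincides with the paper's first case. However, there are two genuine problems. First, the proposed reduction to a ``canonical situation where the root is $[\Sigma,\phi]$ and all relevant edges point outward'' is not available: heights are intrinsic to the vertices, so no change of representative or pivot can turn an incoming edge at the root into an outgoing one. What the paper actually does is base the computation not at the root but at the unique vertex of the corner all of whose incident corner edges go out; in the mixed cases this is not the root: with two edges out and one in it is the root's lower neighbour, and with one edge out and two in it is the bottom vertex of the square spanned by the two incoming edges, two levels below the root. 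All seven corner vertices are then written as $[\Sigma\cup(\text{subset of }\{H,I,J\}),\psi]$ by repeated use of Lemma~\ref{lemma3.4}, and the completing vertex is the missing subset --- which in the mixed cases is $[\Sigma\cup H\cup J,\psi]$, respectively $[\Sigma\cup I,\psi]$, i.e.\ it sits at or below the height of the root, not above it as your ``triple union'' picture suggests.

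Second, your hardest case (exactly one outgoing axis edge) is essentially left open: you appeal to planarity, do not identify the eighth vertex, and even entertain that the configuration might be ruled out --- it is not; it occurs and must be completed. The missing step is purely combinatorial: write the corner's minimum as $[\Sigma,\psi]$, its two neighbours as $[\Sigma\cup H,\psi]$ and $[\Sigma\cup J,\psi]$, the root as $[\Sigma\cup H\cup J,\psi]$, and the two remaining middle vertices as $[\Sigma\cup H\cup K_1,\psi]$ and $[\Sigma\cup J\cup K_2,\psi]$; comparing the expressions of the top vertex of the corner, which must have height $h(\Sigma)+3$, forces $K_1=K_2=:I$, and the vertex $[\Sigma\cup I,\psi]$ then completes the corner. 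Without this identification of $K_1$ with $K_2$ (hence the fact that the same three polygons $H,I,J$ adjacent to $\Sigma$ account for all seven vertices) your argument does not produce the cube. A smaller instance of the same bookkeeping is hidden in your two-out-one-in case: after moving down along the incoming edge, the square spanned by the two outgoing edges is not based at the lower vertex, and you still need Lemma~\ref{lemma3.4} applied from that lower vertex to its two diagonal neighbours in the other two squares to match the polygons before the cube closes up.
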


\begin{proof}
   We consider $\Cp(A_{n,m})$, the proof remains the same for $\Dp(A_{n,m})$. In Figure \ref{Fig111}, we illustrate the three possible $3$-corners with non-attracting roots in $\Cp(A_{n,m})$. \begin{figure}
    \centering
    \includegraphics[scale=0.3]{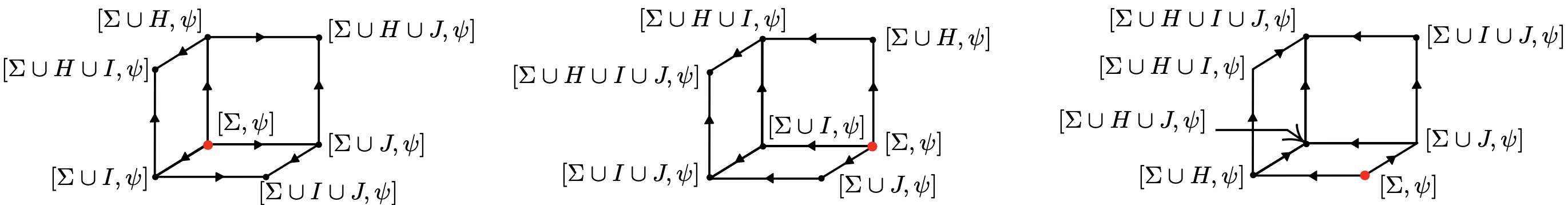}
    \caption{ }
    \label{Fig111} \end{figure}
     In each case, our objective is to identify a vertex that completes the $3$-corner into a $3$-cube. We mark with a red dot the vertices from which all incident edges point outwards.
    \begin{itemize}
         \item Consider Case 1 in Figure \ref{Fig111}. In this case, there is only one red point written as $[\Sigma, \psi]$. By Lemma \ref{lemma3.4}, there are three polygons $H,I,J$ adjacent to $\Sigma$, so that the vertices adjacent to the red point are $[\Sigma \cup H, \psi]$, $[\Sigma \cup J, \psi]$ and $[\Sigma \cup I, \psi]$. Again by Lemma \ref{lemma3.4}, the common adjacent vertices of the previous vertices are  $[\Sigma \cup H \cup J, \psi]$, $[\Sigma \cup H \cup I, \psi]$ and $[\Sigma \cup I \cup J, \psi]$. The vertex $[\Sigma \cup H \cup I\cup J, \psi]$ completes the $3$-corner into a $3$-cube.
        \item  Here we consider Case 2 in Figure \ref{Fig111}. There is only one red point which can be written as $[\Sigma, \psi]$. As for the case above, by Lemma \ref{lemma3.4}, its adjacent vertices are $[\Sigma \cup H, \psi]$, $[\Sigma \cup J, \psi]$ and $[\Sigma \cup I, \psi]$. The common adjacent vertices of the previous vertices are $[\Sigma \cup H \cup I, \psi]$ and $[\Sigma \cup I \cup J, \psi]$. The remaining vertex is $[\Sigma \cup H \cup J, \psi]$ and we take the vertex $[\Sigma \cup H \cup J, \psi]$ to complete the $3$-corner into a $3$-cube.
        \item For the third case in Figure \ref{Fig111} we proceed in the same way as above. Assume that the red point is $[\Sigma,  \psi]$. Again, by Lemma \ref{lemma3.4}, the adjacent vertices of the latter are $[\Sigma \cup H, \psi]$ and $[\Sigma \cup J, \psi]$. Consequently the root of the $3$-corner is $[\Sigma \cup H \cup J, \psi ]$. The remaining adjacent vertices of $[\Sigma \cup H, \psi]$, resp. $[\Sigma \cup J, \psi]$ are written
        $[\Sigma \cup H \cup K_1, \psi]$, resp. $[\Sigma \cup J \cup K_2, \psi]$. Next, the common adjacent vertices of $[\Sigma \cup H \cup K_1, \psi]$, $[\Sigma \cup J \cup K_2, \psi]$ and $[\Sigma \cup H \cup J, \psi]$ must have an height equals to $h(\Sigma)+3$ so $I:=K_1=K_2$. The remaining vertex, having all its edges pointing towards it is $[\Sigma \cup H \cup J \cup I, \psi]$. The vertex $[\Sigma \cup I, \phi]$ completes the $3$-corner into a $3$-cube.
    \end{itemize}      
\end{proof}

\begin{figure}[h]
    \centering
    \includegraphics[scale=0.3]{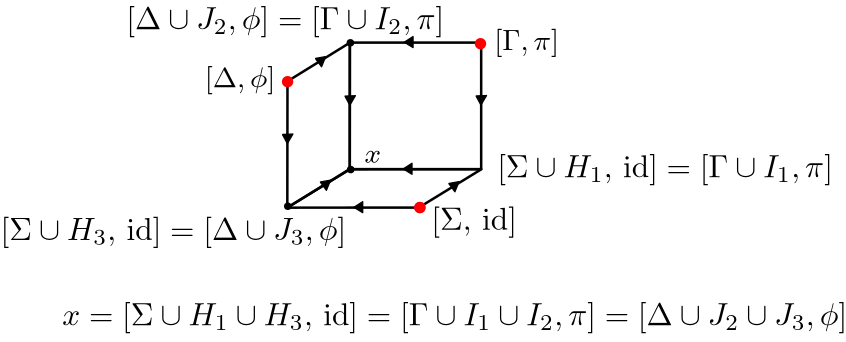}
    \caption{ }
    \label{Fig10} \end{figure}

In this section, we denote by $M$ the central polygon in $\Cp(A_{n,m})$, resp. $\Dp(A_{n,m})$, which is the polygon whose underlying vertex has valence $m$. If $m=n+1$, the central polygon in $\Cp(A_{n,m})$ is an arbitrary polygon that we fix. We use the notation in Figure~\ref{Fig10} to describe a $3$-corner in $\Cp(A_{n,m})$, resp. $\Dp(A_{n,m})$, having an attracting root.

Our last goal in this subsection is to show the following proposition:

\begin{proposition}\label{prop3corner}\label{lemma2}\label{lemma4}\label{lemma1}
Let $(m,n)\neq (1,2),\; (2,1)$, every $3$-corner with an attracting root in $\Dp(A_{n,m})$ can be completed into a $3$-cube. If $1 \leq m \leq n+1$, then this also holds in $\Cp(A_{n,m})$.
\end{proposition}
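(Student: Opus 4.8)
The plan is the following. Fix a $3$-corner with an attracting root $r$, set $h:=h(r)$, and label its three squares $S_{12},S_{13},S_{23}$, where $S_{ij}$ has vertices $r,v_i,v_j,w_{ij}$ (the two edges of $S_{ij}$ incident to $r$ being the ones it shares with the other two squares). Since $r$ is attracting, both edges of $S_{ij}$ at $r$ go in, so Lemma~\ref{case2cube} forces $h(v_1)=h(v_2)=h(v_3)=h-1$ and $h(w_{12})=h(w_{13})=h(w_{23})=h-2$. Moreover $w_{12},w_{13},w_{23}$ are pairwise distinct: if, say, $w_{12}=w_{13}$, then $S_{12}$ and $S_{13}$ would share the two consecutive edges $rv_1$ and $v_1w_{12}$, producing a copy of $K_{2,3}$ in the $1$-skeleton and contradicting Lemma~\ref{lemmaK23}. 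To complete the corner into a $3$-cube it therefore suffices, by Proposition~\ref{cubecomp}, to exhibit a vertex $b$ with $h(b)=h-3$ that is joined by an edge to each of $w_{12},w_{13},w_{23}$; the cube-completion then fills the resulting copy of the $1$-skeleton of a $3$-cube.

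Using the action of $\amod$ to normalise and applying Lemma~\ref{lemma3.4} to $S_{12}$ starting from its bottom vertex $w_{12}$, we may write $w_{12}=[\Sigma,\id]$, $v_1=[\Sigma\cup H_1,\id]$, $v_2=[\Sigma\cup H_2,\id]$ and $r=[\Gamma,\id]$ with $\Gamma:=\Sigma\cup H_1\cup H_2$ and $H_1\ne H_2$ polygons adjacent to $\Sigma$. The heart of the argument is to describe $v_3,w_{13},w_{23}$ in this same frame: the claim is that $v_3=[\Gamma\setminus P_3,\id]$ for a leaf $P_3$ of the dual tree of $\Gamma$ with $P_3\notin\{H_1,H_2\}$, and consequently $w_{13}=[\Gamma\setminus(H_2\cup P_3),\id]$ and $w_{23}=[\Gamma\setminus(H_1\cup P_3),\id]$. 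To prove this one analyses in turn the square $S_{13}$ from its bottom vertex $w_{13}$ (a descending neighbour of $v_1$ distinct from $w_{12}$) and the square $S_{23}$ from $w_{23}$, uses Lemma~\ref{lemma3.4} upward from $w_{13},w_{23}$ to $v_3$, applies Lemma~\ref{case2cube} in each relevant square, and invokes the distinctness of the seven vertices; this forces the two polygons removed from $\Sigma$ on the two sides to coincide with a single leaf $P_3$. The delicate point — and the reason this attracting case is harder than Lemma~\ref{lem3cornerA} — is that a descending edge from $[\Gamma,\id]$ need not a priori be of the form $[\Gamma\setminus(\text{polygon}),\id]$, since the equivalence relation permits representatives in which the removed region is not a polygon of the rigid structure; here the vertex $v_3$ lies simultaneously in $S_{13}$ and $S_{23}$, which pins its frame down from two sides, and a short case analysis on the position of $H_1,H_2,P_3$ (along the lines of Figure~\ref{Fig10}) excludes the remaining possibilities. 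For $\Cp(A_{n,m})$ this is exactly where the hypothesis $1\le m\le n+1$ is used, since the offending configurations occur around a polygon of valence larger than $n+1$; for $\Dp(A_{n,m})$ the same configurations are excluded unconditionally, because the central polygon has height $0$ and lies in every admissible surface, so it can play no role in a descending edge.

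Granting this identification, the seven vertices of the corner are precisely $[\Gamma\setminus E,\id]$ for $E$ ranging over the subsets of $\{H_1,H_2,P_3\}$ of size at most $2$, and the natural candidate for the missing vertex is
$$b:=[\Gamma\setminus(H_1\cup H_2\cup P_3),\id]=[\Sigma\setminus P_3,\id].$$
It remains to check that $b$ is a genuine vertex and has the required edges. Passing to dual trees, $\Gamma$ is a finite subtree of $A_{n,m}$ and $H_1,H_2,P_3$ are three of its leaves, so $\Gamma\setminus(H_1\cup H_2\cup P_3)$ is again a union of polygons, hence admissible; in the case of $\Dp(A_{n,m})$ it still contains the central polygon, since $w_{13}$ and $w_{23}$ already do, so $b$ is a vertex of $\Dp(A_{n,m})$. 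Finally, since $H_1,H_2,P_3$ are leaves, each of $\Gamma\setminus H_1$, $\Gamma\setminus H_2$, $\Gamma\setminus P_3$ is connected, so the unique neighbour of $P_3$ (resp. of $H_1$, of $H_2$) in $\Gamma$ lies in $\Gamma\setminus(H_1\cup H_2\cup P_3)$; hence $b$ is joined by an edge to $w_{12}=b\cup P_3$, to $w_{13}=b\cup H_1$ and to $w_{23}=b\cup H_2$, and by Proposition~\ref{cubecomp} the eight vertices span a $3$-cube, completing the corner. The main obstacle is the middle step: controlling the frames of $v_3,w_{13},w_{23}$ relative to the frame of $r$ fixed by $S_{12}$, i.e.\ showing that the descending edges behave like leaf-polygon removals, which is where the distinction between $\Cp(A_{n,m})$ (needing $m\le n+1$) and $\Dp(A_{n,m})$ (needing no hypothesis) lives; once the combinatorial picture is pinned down, the rest is a finite check.
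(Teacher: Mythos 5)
Your overall strategy is the same as the paper's (produce an eighth vertex of height $h-3$ by deleting a suitable one-punctured region from the frame of the bottom square), but the step you call the ``heart of the argument'' is asserted rather than proved, and it is exactly the hard part of the statement. You claim that $v_3=[\Gamma\setminus P_3,\id]$ (and hence $w_{13},w_{23},b$) can be written in the \emph{same} frame $\id$ fixed by $S_{12}$, with $P_3$ a polygon of the rigid structure, and you dismiss the verification as ``a short case analysis on the position of $H_1,H_2,P_3$.'' But Lemma~\ref{lemma3.4} only applies upward: applying it from $w_{13}$ or $w_{23}$ puts those squares in \emph{their own} frames, and when you transport back to the $\id$-frame the region removed from $\Gamma$ is only the image of a polygon under a homeomorphism that is rigid outside some support --- a one-punctured disk bounded by frontier arcs, not a priori a polygon of the rigid structure. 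The paper's proof consists precisely in handling this: Lemma~\ref{fact1} identifies this disk as $\pi(I_1)=\phi(J_3)\subseteq\Sigma$ and counts its frontier arcs; Lemma~\ref{claimheight} proves $h(\Sigma)\ge 2$ via a frontier-arc inequality, and this is where the hypothesis $1\le m\le n+1$ actually enters (quantitatively, through $\sharp\Fr(\Sigma)$ and $\sharp\Fr(I_1)$, including a delicate discussion of when $I_1$ or $H_1$ is the central polygon), not through your vague ``offending configurations occur around a polygon of valence larger than $n+1$''; and Lemma~\ref{lemmaextrem} constructs a re-marking $\chi$ (cyclically permuting frontier arcs) after which the disk becomes an honest polygon $K$, so that the completing vertex is $[\Omega\smallsetminus K,\chi]$ --- in a new frame, not in the $\id$-frame as you claim. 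Note that Example~\ref{nonex2} exhibits, for $m>n+1$, an attracting $3$-corner in $\Cp(A_{n,m})$ whose completing vertex would have to have height zero and does not exist; so any proof that never uses the inequality $m\le n+1$ in an essential way (yours uses it only in the quoted aside) cannot be correct, and in low-height situations your candidate $b=[\Gamma\setminus(H_1\cup H_2\cup P_3),\id]$ may simply fail to be a vertex.

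The $\Dp(A_{n,m})$ half has the same gap in sharper form: your justification that no hypothesis is needed (``the central polygon has height $0$ \dots so it can play no role in a descending edge'') is not an argument. In the paper the case $h(\Sigma)\ge 2$ goes through Lemma~\ref{lemmaextrem} exactly as in $\Cp$, but the remaining cases $h(\Sigma)=1$ require separate explicit constructions: pushing the strand containing $H_1,H_3$ into another strand when $m\ge 3$, a cyclic rotation reducing to a good configuration when $m=2$, and a direct rigidity argument showing $[M,\id]=[M,\pi]=[M,\phi]$ when $m=1$ (plus the observation that $\Dp(A_{1,1})$, $\Dp(A_{1,2})$ have no $3$-corners at all). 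None of this appears in your proposal. The surrounding combinatorics in your write-up (heights via Lemma~\ref{case2cube}, distinctness of the $w_{ij}$ via Lemma~\ref{lemmaK23}, the final appeal to Proposition~\ref{cubecomp}) is fine, but as it stands the proof of the key identification is missing, so the argument is incomplete.
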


The following facts can be deduced from the equalities associated with each vertex in Figure~\ref{Fig10}:
\begin{enumerate}[label=(\roman*)]
            \item \label{i} $\pi(\Gamma \cup I_1)= \Sigma \cup H_1$ and $\pi$ is rigid outside $\Gamma \cup I_1$,
            \item \label{ii} $\phi(\Delta \cup J_3)= \Sigma \cup H_3$ and $\phi$ is rigid outside $\Delta \cup J_3$,
            \item \label{iii} $\pi^{-1}\phi(\Delta \cup J_2)= \Gamma \cup I_2$ and $\pi^{-1}\phi$ is rigid outside $\Delta \cup J_2$,
            \item \label{iv} $\pi(\Gamma \cup I_1 \cup I_2)= \Sigma \cup H_1 \cup H_3$ and $\pi$ is rigid outside $\Gamma \cup I_1 \cup I_2$,
            \item  \label{v} $\phi(\Delta \cup J_2 \cup J_3)= \Sigma \cup H_1 \cup H_3$ and $\phi$ is rigid outside $\Delta \cup J_2 \cup J_3$,
            \item \label{vi} $\pi^{-1}\phi(\Delta \cup J_2 \cup J_3)= \Gamma \cup I_1 \cup I_2$ and $\pi^{-1}\phi$ is rigid outside $\Delta \cup J_2 \cup J_3$.
        \end{enumerate}

We need the four following lemmata to prove Proposition \ref{lemma2}.

\begin{lemma}\label{fact1}
     Assume that we are in the situation of a $3$-corner with an attracting root in $\Cp(A_{n,m})$, resp. $\Dp(A_{n,m})$, as depicted in Figure~\ref{Fig10}. The image of the polygon $I_1$ by $\pi$ is contained in $\Sigma$  and includes $\sharp\Fr(I_1)-1$ frontier-arcs from polygons lying outside $\Sigma$ and distinct from $H_1$ and $H_3$.
\end{lemma}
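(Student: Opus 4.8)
\textbf{Plan of proof for Lemma \ref{fact1}.}

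The plan is to unwind the meaning of the hypotheses \ref{i}--\ref{vi} at the level of frontier-arcs. Since $\pi$ is rigid outside $\Gamma \cup I_1$ by \ref{i}, and $I_1$ is a polygon adjacent to $\Gamma$, the homeomorphism $\pi$ carries the frontier of $\Gamma\cup I_1$ to the frontier of $\Sigma \cup H_1$ in an arc-preserving way: each frontier-arc of $\Gamma\cup I_1$ lying outside $\Gamma\cup I_1$ is sent to a frontier-arc of $\Sigma\cup H_1$ lying outside $\Sigma\cup H_1$. First I would compare this with \ref{iv}: since $\pi$ is also rigid outside $\Gamma\cup I_1\cup I_2$ and $\pi(\Gamma\cup I_1\cup I_2)=\Sigma\cup H_1\cup H_3$, and since being rigid outside $\Gamma\cup I_1$ is a stronger condition than being rigid outside $\Gamma\cup I_1\cup I_2$, the two descriptions are compatible. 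The point is to locate $\pi(I_1)$. Because $\pi$ is rigid on the complement of $\Gamma\cup I_1$, it permutes the polygons of that complement; in particular it cannot send $I_1$ into the complement of $\Sigma\cup H_1$ unless it does so rigidly, but $I_1$ is a single polygon adjacent to $\Gamma$ and $\pi(\Gamma)\subseteq \pi(\Gamma\cup I_1)=\Sigma\cup H_1$. So $\pi(I_1)$ is the unique polygon of $\Sigma\cup H_1$ not in $\pi(\Gamma)$, or it lies inside $\pi(\Gamma)$.

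Next I would use a height/counting argument to pin down that $\pi(I_1)\subseteq\Sigma$ rather than $\pi(I_1)=H_1$. Here the combination of \ref{i} and \ref{ii}--\ref{iii} is essential: the three polygons $H_1, H_3$ are the two distinct polygons attached to $\Sigma$ that appear in the $3$-corner, and the relation $[\Sigma\cup H_1,\id]$-type equalities force $\pi(\Gamma)=\Sigma$ would be too strong — rather the root being attracting means $h(\Gamma)=h(\Sigma)+1$, i.e. $\Gamma$ has one more polygon worth of height than $\Sigma$ after transport, hence $\pi(\Gamma\cup I_1)$ has height $h(\Sigma)+2$, matching $\Sigma\cup H_1$ (height $h(\Sigma)+1$)... so in fact one must be careful: I would recompute heights directly from Figure \ref{Fig10}, deduce $h(\Gamma)=h(\Sigma)+1$ and $h(\Gamma\cup I_1)=h(\Sigma)+2$, therefore $\pi(I_1)$ contributes one unit of height, so it is a single polygon. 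If $\pi(I_1)=H_1$ then $\pi(\Gamma)=\Sigma$, whence $[\Gamma,\pi]=[\Sigma,\id]$, contradicting the distinctness of the vertices of the $3$-corner (as in the argument of Lemma \ref{case2cube}). Therefore $\pi(I_1)$ is a polygon contained in $\Sigma$.

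Finally, for the frontier-arc count: $\pi$ restricted to the complement of $\Gamma\cup I_1$ is rigid and hence arc-preserving, so it sends the $\sharp\Fr(I_1)$ frontier-arcs of $I_1$ to the $\sharp\Fr(\pi(I_1))=\sharp\Fr(I_1)$ boundary arcs of the polygon $\pi(I_1)$. One of those arcs is the arc shared by $I_1$ and $\Gamma$, which is mapped to the arc shared by $\pi(I_1)$ and the adjacent polygon in $\pi(\Gamma)=$ (part of $\Sigma$); this one is ``internal'' to $\Sigma$ after transport and does not come from a polygon outside $\Sigma$. The remaining $\sharp\Fr(I_1)-1$ arcs are frontier-arcs shared with polygons adjacent to $I_1$ other than $\Gamma$; their images are arcs of $\pi(I_1)\subseteq\Sigma$ shared with polygons of $\Sp^\sharp(A_{n,m})\smallsetminus\Sigma$, and I must check none of these images is a frontier-arc of $H_1$ or $H_3$. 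That follows because $H_1,H_3$ are adjacent to $\Sigma$ via the specific arcs appearing in the $3$-corner data, and $\pi(I_1)\subseteq\Sigma$ meets the complement of $\Sigma$ along arcs disjoint from those (again using that $\pi$ is rigid outside $\Gamma\cup I_1$, so it does not create new incidences with $H_1$ or $H_3$ beyond the ones already recorded in \ref{i}--\ref{ii}). The main obstacle I anticipate is this last bookkeeping step — making precise, in the planar-tree picture of $A_{n,m}$, why the $\sharp\Fr(I_1)-1$ transported arcs avoid $H_1$ and $H_3$ — which will require careful use of the rigidity of $\pi$ outside $\Gamma\cup I_1$ together with the explicit adjacencies encoded in Figure \ref{Fig10}.
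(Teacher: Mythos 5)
The crux of the lemma is the containment $\pi(I_1)\subseteq\Sigma$, and this is where your argument has a genuine gap. Your dichotomy ``either $\pi(I_1)=H_1$ or $\pi(I_1)\subseteq\Sigma$'' is not justified: $\pi$ is only rigid outside $\Gamma\cup I_1$, so the image of the arc separating $I_1$ from $\Gamma$ need not be an arc of the rigid structure, and $\pi(I_1)$ need not be a polygon at all (the paper's proof notes this explicitly). A priori $\pi(I_1)$ is just a one-punctured disk in $\Sigma\cup H_1$ and can straddle the arc between $\Sigma$ and $H_1$: it could contain the puncture of $H_1$ while part of its boundary consists of frontier-arcs of $\Sigma$, or contain a puncture of $\Sigma$ while part of its boundary consists of frontier-arcs of $H_1$. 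In either configuration neither branch of your dichotomy applies; in particular you cannot conclude $[\Gamma,\pi]=[\Sigma,\id]$, since $\pi$ cannot then be isotoped to a map that is rigid outside $\Gamma$. Your height computation only shows that $\pi(I_1)$ contains a single puncture, which does not make it a polygon of the rigid structure. The information that rules out the straddling configurations comes from the \emph{third} square of the $3$-corner, which your argument never actually uses (you mention \ref{ii}--\ref{iii} but do not exploit them). The paper's proof is exactly this one-line observation: comparing \ref{iii} and \ref{vi} gives $\pi^{-1}\phi(J_3)=I_1$, hence $\pi(I_1)=\phi(J_3)$; then \ref{i} gives $\pi(I_1)\subseteq\Sigma\cup H_1$ while \ref{ii} gives $\phi(J_3)\subseteq\Sigma\cup H_3$, so $\pi(I_1)\subseteq(\Sigma\cup H_1)\cap(\Sigma\cup H_3)=\Sigma$. (Also, even in your excluded branch, ``distinctness of the vertices'' is not part of the definition of a $3$-corner; one has to argue via pairwise distinctness of the three squares together with the absence of $1$-corners and of $K_{2,3}$'s.)

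The second half of the statement is likewise left open in your sketch, as you yourself acknowledge. The clean bookkeeping does not require any delicate planar analysis: $I_1$ shares exactly one frontier-arc with $\Gamma$, and it is not adjacent to $I_2$ (two distinct polygons adjacent to the same connected admissible surface cannot be adjacent in a tree), so its remaining $\sharp\Fr(I_1)-1$ arcs are shared with polygons lying outside $\Gamma\cup I_1\cup I_2$. Since $\pi$ is rigid outside $\Gamma\cup I_1$ and, by \ref{iv}, sends the complement of $\Gamma\cup I_1\cup I_2$ onto the complement of $\Sigma\cup H_1\cup H_3$ respecting adjacencies, the images of these $\sharp\Fr(I_1)-1$ arcs are shared with polygons outside $\Sigma\cup H_1\cup H_3$, i.e.\ outside $\Sigma$ and distinct from $H_1$ and $H_3$. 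Your appeal to ``no new incidences with $H_1$ or $H_3$'' is precisely the step that needs \ref{iv} (equivalently $\pi(I_2)=H_3$) together with the non-adjacency of $I_1$ and $I_2$; as written it is an assertion, not an argument.
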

\begin{proof}
We use the same notation as depicted in Figure~\ref{Fig10}. On one hand, by facts \ref{iii} and \ref{vi} we deduce that $\pi^{-1}\phi(J_3)=I_1$. Moreover by \ref{i}, $\pi(I_1) \subseteq \Sigma \cup H_1$. On the other hand, by Fact \ref{ii}, $\phi(J_3) \subseteq \Sigma \cup H_3$. So we deduce that $\pi(I_1)=\phi(J_3) \subseteq \Sigma$. The polygon $I_1$ is adjacent to $\Gamma$, more precisely, it shares one frontier-arc with $\Gamma$ and the remaining frontier-arcs are shared with polygons lying outside $\Gamma \cup I_2$. Then, $\pi(I_1)$ is not necessarily a polygon of the rigid structure. However, since $\pi$ is rigid outside $\Gamma \cup I_1$, it respects the adjacency of polygons outside $\Gamma \cup I_1$. Hence, among the $\sharp\Fr(I_1)$ images of the frontier-arcs of $I_1$ by $\pi$, $\sharp\Fr(I_1)-1$ are shared with polygons lying outside $\pi(\Gamma \cup I_2)= \Sigma \cup H_3$, whereas one image of a frontier arc is shared with $\pi(\Gamma)\subseteq \Sigma \cup H_1$. 
\end{proof}

Consider a $3$-corner with an attracting root in $\Dp(A_{n,m})$, as depicted in Figure~\ref{Fig10}. By the previous proof, $\pi(I_1)\subseteq \Sigma$ so we deduce that $\Sigma$ is an admissible surface containing at least one puncture: $h(\Sigma)\geq 1$.  

\begin{cor}\label{corHEightDanm}
    Let $x$ be a vertex with minimal height in a $3$-corner with an attracting root in $\Dp(A_{n,m})$. Then, $h(x)\geq 1$.
\end{cor}

\begin{remark}
    Let $\Sigma$ be an admissible surface in $\Cp(A_{n,m})$ and let $k$ be the height of $\Sigma$. If $\Sigma$ contains the central polygon then  the number of frontier-arcs of $\Sigma$ is given by $\sharp\Fr(\Sigma)=m+(n-1)(k-1)$, otherwise $ \sharp\Fr(\Sigma)=n+1+(n-1)(k-1)=k(n-1)+2$.
\end{remark}

\begin{lemma}\label{claimheight}
       Let $1 \leq m \leq n+1$ and let $x$ be a vertex with minimal height in a $3$-corner with an attracting root in $\Cp(A_{n,m})$. Then $h(x)\geq 2$. 
\end{lemma}

\begin{proof}
    Consider a $3$-corner with an attracting root in $\Cp(A_{n,m})$ as depicted in Figure~\ref{Fig10}, we use the same notations. By Lemma \ref{fact1}, $\pi(I_1)$ lies in $\Sigma$. Therefore $\Sigma$ has enough frontier-arcs so that $\pi(I_1)$ shares its boundary with $\sharp\Fr(I_1)-1$ polygons lying outside $\Sigma \cup H_1$ and distinct from $H_3=\pi(I_2)$ (since $I_1$ and $I_2$ are not adjacent). Consequently, the number of frontier arcs of $\Sigma$ minus two (for the one shared with $H_1$ and $H_3$) must be greater than (or equal to) $\sharp\Fr(I_1)-1$: \begin{equation}\label{bigstar}
        \sharp\Fr(\Sigma)-2 \geq \sharp\Fr(I_1)-1.
    \end{equation}
First, we assume that $I_1$ is not the central polygon $M$ and let $k:=h(\Sigma)=h(\Gamma)=h(\Delta)$. Hence (\ref{bigstar}) becomes: $$
 \left.
    \begin{array}{ll}
         m+(k-1)(n-1)-2 & \mbox{if } M\subset \Sigma \\
         k(n-1) & \mbox{otherwise}
    \end{array}
\right \}\geq n.
$$ Both cases imply directly that $k\geq 2$. 

Secondly, we assume that $I_1$ is the central polygon. Observe that if $m=n+1$, inequality (\ref{bigstar}) directly implies that $k\geq 2$. Then, by Fact \ref{i}, we have the following equality: 
\begin{equation}\label{bigstar2}
    \sharp\Fr(\Gamma \cup I_1)=\sharp\Fr(\Sigma \cup H_1).
\end{equation}
Under our assumptions this equality becomes:$$ 
m+k(n-1)=\left\{
    \begin{array}{ll}
         m+k(n-1) & \mbox{if } M\subseteq \Sigma \cup H_1 \\
         (k+1)(n-1)+2 & \mbox{otherwise}
    \end{array}
\right.$$ So if $M \not\subset \Sigma \cup H_1$, we obtain $m=n+1$ and so $k\geq 2$. We consider the remaining case where $I_1=M$ and $\Sigma \cup H_1$ contains $M$.\begin{itemize}
    \item \underline{Assume $H_1=M$}, and so $\Sigma$ does not contain $M$. By \ref{ii} and \ref{v}, $\phi(J_2)=H_1$ and by the rigidity of $\phi$ outside $\Delta \cup J_3$ we deduce that $J_2=M$. Now, since $I_1=M$, the subsurface $\Gamma \cup I_2$ does not contain $M$. Moreover by \ref{ii} one has $\sharp\Fr(\Gamma \cup I_2)=\sharp\Fr(\Delta \cup J_2),$ which becomes: $$(k+1)(n-1)+2=m+k(n-1)$$ so $m=n+1$ and $k\geq 2$.
    \item \underline{Assume $\Sigma$ contains $M$.} By \ref{iii} and \ref{vi}, $\pi^{-1}\phi(J_3)=I_1$ and by the rigidity of $\pi^{-1}\phi$ outside $\Delta \cup J_2$ we deduce that $J_3=M$. By contradiction, assume that $k=1$ and thus $\Sigma=M$. Recall that by Lemma \ref{fact1} one has $\pi(I_1)\subseteq \Sigma$, on top of that by \ref{i} $\pi$ is rigid outside $\Gamma \cup H_1$. Similarly, one has $\phi(J_3)\subseteq \Sigma$ and by \ref{v} $\phi$ is rigid outside $\Delta \cup J_3$. Hence, on one side $\pi(I_1)=M$ and $\phi(J_3)=M$. But on the other side this implies that $\pi(I_1)$ is adjacent to $H_3=\pi(I_2)$ and similarly, $\phi(J_3)$ is adjacent to $H_1=\phi(J_2)$, which contradicts the rigidity of $\pi$ and $\phi$ at $I_2$ and $J_2$, respectively. We conclude that  $k \geq 2.$ 
\end{itemize}

\end{proof}

Let $\amod^\square(A_{n,m})$ denote either $\amod(A_{n,m})$ or $\amod^*(A_{n,m})$.
\begin{lemma}\label{lemmaextrem}
    Let $\Sigma$ be an admissible surface in $\Cp(A_{n,m})$, resp. $\Dp(A_{n,m})$, with $h(\Sigma)\geq 2$. Let $D \subseteq \Sigma$ be a one-punctured disk such that: \begin{itemize}[-]
        \item $\Sigma\smallsetminus D$ is connected and $D$ contains exactly either $n$ or $m-1$ frontier-arcs from the polygons of the rigid structure;
        \item if $D$ contains $m-1$ frontiers arcs then either $m=n+1$ or $\Sigma$ contains the central polygon.
    \end{itemize} Then, there exist $\chi \in \amod^\square(A_{n,m})$ and an admissible surface $\Omega$ such that $[\Omega, \chi]=[\Sigma, \id]$ and $\chi^{-1}(D)\subseteq \Omega$ is a polygon from the rigid structure.
\end{lemma}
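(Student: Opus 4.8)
The plan is to transport the problem to the inverse homeomorphism and to reduce it to a pure change‑of‑coordinates statement. Unwinding the equivalence relation, it is enough to produce an asymptotically rigid homeomorphism $\eta$ of the punctured arboreal surface that is rigid outside $\Sigma$ and with $\eta(D)$ a polygon of the rigid structure: one then sets $\Omega:=\eta(\Sigma)$ (an admissible surface, because $\eta$ is asymptotically rigid), $\chi:=\eta^{-1}$ and $P:=\eta(D)\subseteq\Omega$, and $[\Omega,\chi]=[\Sigma,\id]$ holds since $\chi$ is rigid outside $\Omega=\chi^{-1}(\Sigma)$ and maps $\Omega$ onto $\Sigma$. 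Observe moreover that $D$ may be replaced by any one‑punctured disc isotopic to it through discs contained in $\Sigma$: after composing $\eta$ with the corresponding isotopy, which is the identity outside $\Sigma$ and hence asymptotically rigid, neither the hypotheses nor the conclusion change. I would use this freedom to put $D$ in minimal position inside $\Sigma$.

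Now list the frontier arcs $a_1,\dots,a_k$ lying on $\partial D$ in cyclic order ($k=n$ or $k=m-1$ by hypothesis), and let $\alpha$ be the part of $\partial D$ lying in the interior of $\Sigma$; since $\Sigma$ is a disc with punctures and $\Sigma\smallsetminus D$ is connected, $\alpha$ is a single arc, along which $D$ is cut off from $\Sigma$. The boundary pattern of $D$ — one puncture, the $k$ rigid arcs $a_i$, and the inner arc $\alpha$ — is precisely that of a polygon $P$ sitting inside an admissible surface and glued to the rest along the chain $P\cap(\Omega\smallsetminus P)$, for a suitable valence of $P$: this is where the numerical hypothesis enters, since it makes an admissible target valence available — $n+1$ when $k=n$, and when $k=m-1$ the extra clause ($m=n+1$, or the presence of the central polygon, which is exactly the point on which $\Cp$ and $\Dp$ differ) supplies a polygon of the required valence — with $\alpha$ matched to one arc, or to a chain of arcs, of $P$ accordingly. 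I would then choose an admissible surface $\Omega$ of height $h(\Sigma)$ carrying such a polygon $P$ and whose complement is rigidly isomorphic, as a cyclically ordered family of half‑tree subsurfaces, to the complement of $\Sigma$, in such a way that the half‑trees beyond the arcs of $P$ correspond to the half‑trees of $A_{n,m}$ hanging off $D$ along $a_1,\dots,a_k$. Because $\Sigma$ contains at least two punctures, this can be done by growing $T_\Omega$ from the vertex carrying $P$ and reproducing the remaining complementary half‑trees in the correct cyclic order. Finally one extends the rigid identification between the two complements over the discs $\Sigma$ and $\Omega$ — both are discs with $h(\Sigma)$ punctures, so a boundary identification always extends — and corrects by a homeomorphism supported in $\Sigma$ so that the puncture of $P$ goes to that of $D$ and $P\cap(\Omega\smallsetminus P)$ goes to $\alpha$; the resulting $\eta$ is rigid outside $\Sigma$ and sends $P$ to $D$, as required.

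The hard part is the combinatorial step in the middle: one must check that the finitely many half‑trees of $A_{n,m}$ hanging off $D$ genuinely assemble around a single vertex of the prescribed valence, and then build $T_\Omega$ so that the whole complement of $\Omega$ reproduces that of $\Sigma$ in the right cyclic order. I expect this to need a short case analysis depending on whether $D$ meets the central polygon and on whether $k=n$ or $k=m-1$, and it is exactly here that the hypothesis on the number of frontier arcs of $D$ and the central‑polygon clause get used — and where the statement for $\Cp(A_{n,m})$ genuinely needs $1\le m\le n+1$ while the one for $\Dp(A_{n,m})$ does not. Everything else is routine surface topology.
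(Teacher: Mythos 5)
Your reduction is sound and matches what the paper implicitly does: it suffices to produce an asymptotically rigid homeomorphism $\eta$, rigid outside $\Sigma$, with $\eta(D)$ a polygon of the rigid structure, and then set $\Omega:=\eta(\Sigma)$, $\chi:=\eta^{-1}$. The problem is that you then leave the heart of the lemma unproven. Your construction of $\eta$ hinges on the existence of an admissible surface $\Omega$ containing a polygon $P$ with the prescribed number of frontier arcs, positioned so that the complement of $\Omega$ is \emph{rigidly} isomorphic, in the correct cyclic order, to the complement of $\Sigma$ (with $P$'s frontier arcs matched to the arcs $a_1,\dots,a_k$ of $D$). You yourself flag this realization step as ``the hard part'' and defer it to an unexecuted case analysis --- yet this is precisely where every hypothesis of the lemma does its work: the half-trees hanging off $\Fr(\Sigma)$ are not all rigidly isomorphic (when the central polygon lies outside $\Sigma$, exactly one of them contains the valence-$m$ vertex, and it matches the others only if $m=n+1$), so the clause ``$m=n+1$ or $M\subseteq\Sigma$'' and the count $n$ versus $m-1$ must be invoked explicitly to build the matching; moreover in $\Dp(A_{n,m})$ any admissible $\Omega$ must contain the central polygon, an additional constraint your abstract construction never addresses. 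As written, the proposal is a correct framing plus an unproved claim, not a proof.

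For comparison, the paper avoids constructing $\Omega$ abstractly altogether: using $h(\Sigma)\geq 2$ it picks a polygon $K$ \emph{already inside} $\Sigma$ with the same number of arcs as $D$ (here the ``$m=n+1$ or $M\subseteq\Sigma$'' clause guarantees such a $K$ exists when $D$ has $m-1$ arcs), takes the minimal admissible surface $\Gamma$ containing $D\cup K$, and uses an asymptotically rigid homeomorphism $\mu$ supported on $\Gamma$ that cyclically permutes the frontier arcs of $\Gamma$, carrying the arcs of $D$ onto those of $K$; then $\Omega:=\mu(\Sigma)$ is automatically admissible, $\chi:=\mu^{-1}$, and the compatibility of $\mu$ with the rigid structure outside its support comes for free because $\mu$ is a single globally defined homeomorphism rather than a complement-matching glued to a disc extension. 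If you want to salvage your route, you must actually carry out the combinatorial realization of $\Omega$ (including the cyclic position of the special half-tree and, for $\Dp$, the central polygon); the paper's ``rotate $D$ onto an existing polygon $K$'' device is the missing idea that makes this unnecessary.
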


\begin{proof}
Let $\Sigma$ be an admissible surface in $\Cp(A_{n,m})$ (the proof is the same for $\Dp(A_{n,m})$), and $D \subseteq \Sigma$ be a one-punctured disk. We assume that the assumptions in the statement above hold. Let $K$ be a polygon inside $\Sigma$ with as many arcs from the rigid structure as $D$. There exists a minimal admissible surface $\Gamma$ containing both $D$ and $K$ and an asymptotically rigid homeomorphism $\mu$ permuting cyclically the frontier arcs of $\Gamma$ and such that the $n$ (resp. $m-1$) frontiers arcs contained in $D$ are sent onto those of $K$, see Figure \ref{Figures Article-49}. Then $\chi:=\mu^{-1}$ satisfies the statement with $\Omega:=\mu(\Sigma)$. \begin{figure}
    \centering
    \includegraphics[scale=0.16]{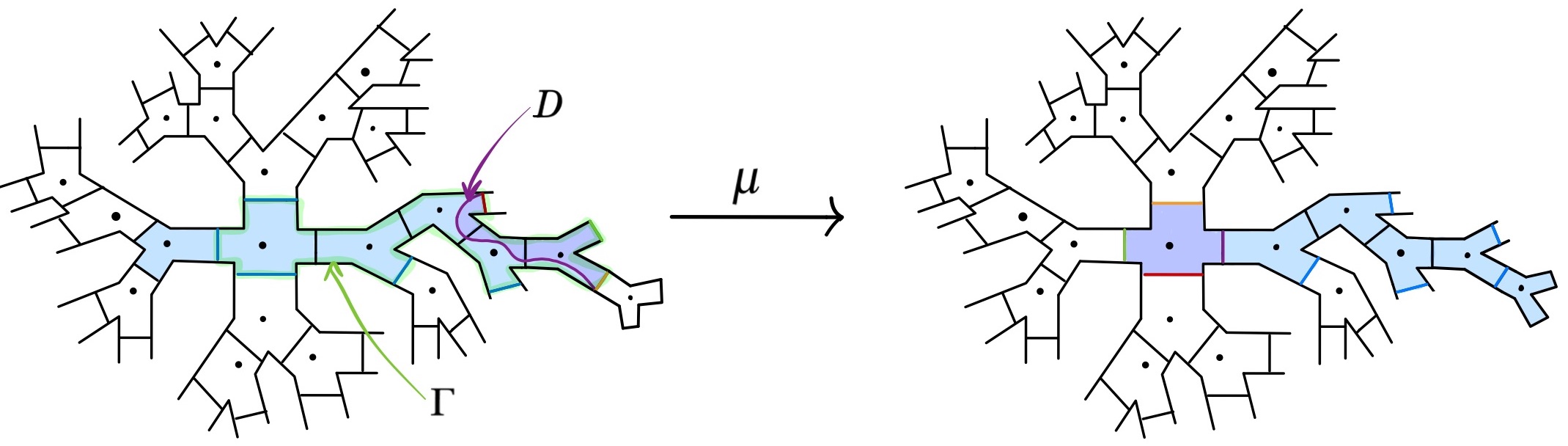}
    \caption{ }
    \label{Figures Article-49}
\end{figure}
\end{proof}

\begin{lemma}\label{height2}
    Consider a 3-corner with an attractive root whose minimum vertex height is $\geq 2$. Assume that we are in one of the following situations: \begin{itemize}[-]
           \item $1 \leq m \leq n+1$, and we are considering a 3-corner with an attracting root in $\Cp(A_{n,m})$;
           \item $(m,n)\neq (1,2),\; (2,1)$, and we are considering a 3-corner with an attracting root in $\Dp(A_{n,m})$.
       \end{itemize} 
       Then, the attracting root can be completed into a 3-cube.
\end{lemma}

\begin{proof}
    Consider a $3$-corner with an attracting root and the notations as in Figure \ref{Fig10}, assume that we are in one of the situations described by the assumptions of the statement, in particular $h(\Sigma)\geq 2$. Let $D=\pi(I_1)$, first we show that $D$ satisfies the assumptions of Lemma~\ref{lemmaextrem}. \begin{itemize}[-]
        \item Recall that in the case of a $3$-corner with an attracting root $\pi(\Gamma \cup I_ 1)= \Sigma \cup H_1$ where $I_ 1$ is adjacent to $\Gamma$ and $\pi(I_1)\subseteq \Sigma$. Thus $\Sigma\smallsetminus D$ is connected. Moreover, $I_1$ shares either $n$ or $m-1$ frontier-arcs with polygons outside $\Gamma \cup I_2$. By rigidity of $\pi$ outside $\Gamma \cup I_1$, we deduce that $D=\pi(I_1)$ contains $n$ or $m-1$ frontier-arcs from the polygons of the rigid structure.
        
        \item Assume that $D$ contains $m$ frontier-arcs. If $m=n+1$ the second assumption is satisfied. Otherwise $m\neq n+1$ and $I_1$ is the central polygon $M$. Recall that in our situation one has: $\sharp\Fr(\Gamma \cup I_1)=\sharp\Fr(\Sigma \cup H_1).$ On one side $\sharp\Fr(\Gamma \cup I_1)=m+k(n-1)$. On the other side, if $M$ is not contained in $\Sigma \cup H_1$ then $\sharp\Fr(\Sigma \cup H_1)=(k+1)(n-1)+2$ and so $m=n+1$ which is excluded. Hence $M$ is contained in $\Sigma \cup H_1$, if $M$ is contained in $\Sigma$ then the assumption is satisfied. By contradiction if $H_1=M$ then $\Sigma$ does not contain $M$. By \ref{ii} and \ref{v}, $\phi(J_2)=H_1$ and by the rigidity of $\phi$ outside $\Delta \cup J_3$ we deduce that $J_2=M$. Now, since $I_1=M$, the subsurface $\Gamma \cup I_2$ does not contain $M$. Moreover by \ref{ii} one has $\sharp\Fr(\Gamma \cup I_2)=\sharp\Fr(\Delta \cup J_2),$ which becomes: $(k+1)(n-1)+2=m+k(n-1)$ so $m=n+1$ which is excluded.
    \end{itemize}
By Lemma~\ref{lemmaextrem} applied with $D=\pi(I_1)$, we can rewrite: $$[\Sigma \cup H_1 \cup H_3, \id]=[\Omega \cup H'_1 \cup H'_3, \chi]=[(\Omega\smallsetminus K) \cup K \cup H'_1 \cup H'_3, \chi],$$ where $K:=\chi^{-1}(D)$ is a polygon of the rigid structure and $H'_i=\chi^{-1}(H_i)$. Observe that $\Omega\smallsetminus K$ is connected. Thus, we complete our $3$-corner into a $3$-cube by adding the vertex $[\Omega\smallsetminus K, \chi]$. See Figure~\ref{Fig09}.
\begin{figure}
    \centering
    \includegraphics[scale=0.25]{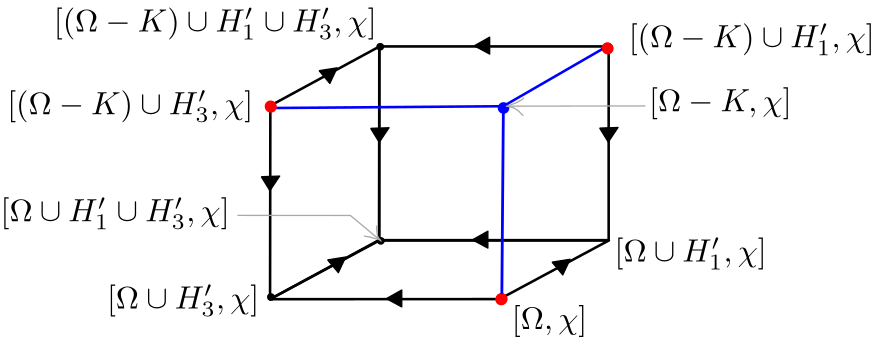}
    \caption{ }
    \label{Fig09}
\end{figure}

\end{proof}

\begin{proof}[Proof of Proposition \ref{prop3corner}]
Consider a $3$-corner with an attracting root and the notations as in Figure \ref{Fig10}. Let $1 \leq m \leq n+1$, by Lemma \ref{claimheight} in case of a $3$-corner with an attracting root in $\Cp(A_{n,m})$ one has $h(\Sigma)\geq2$ by Lemma \ref{height2} the $3$-corner can be completed in a $3$-cube. By the same lemma and Corollary \ref{corHEightDanm}, it remains to consider the $3$-corners in $\Dp(A_{n,m})$ with an attracting root such that $h(\Sigma)=1$. 

First assume that $m\geq 3$ and consider a $3$-corner with an attracting root in $\Dp(A_{n,m})$ such that $h(\Sigma)= 1$, then there exists a polygon, denoted as $H$, adjacent to the central polygon in such a way that $\Sigma = M \cup H$. If the $H_i$ are both adjacent to $M$ we are done since in this case the vertex $[M, \id]$ completes the $3$-corner into a $3$-cube. Otherwise, 
    \begin{itemize}
        \item either $H_1$ and $H_3$ are not adjacent to $M$, thus they are both adjacent to $H$ and lie in the $R_1$ strand. Since $m\geq 3$ one can push $R_1$ into $R_2$ using $\psi$ described in Figure \ref{Figure_12.2}. Then let $\chi:=\psi^{-1}$, $\Omega:=\psi(\Sigma)$ and $H'_i=\chi^{-1}(H_i)$. 
        \begin{figure} 
        \centering
        \includegraphics[scale=0.19]{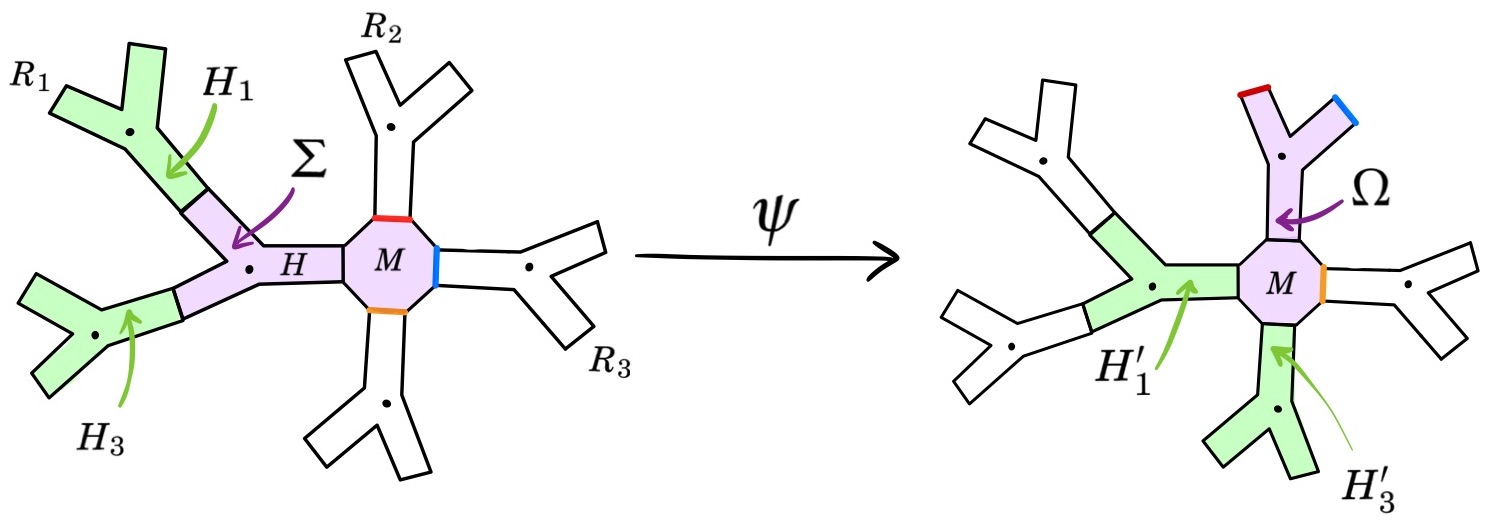}
        \caption{ }
        \label{Figure_12.2}
        \end{figure}
        \item or $H_1$ is adjacent to $M$ whereas $H_3$ is not. We proceed as in the previous case but we need to be careful to push the strands where $H_3$ lies in the one where $H_1$ does not lie (this is valid since here $m\geq$ 3).
        \end{itemize}
        In both cases we complete our $3$-corner into a $3$-cube by adding the vertex $[M, \chi]$.

        Secondly, assume that $m=1$, $n\geq 2$ and let $h(\Sigma)= 1$. Since $m=1$ one has $\Sigma=\Gamma=\Delta=M\cup H$ and $M$ shares no arcs with polygons distinct from $H$. By Fact \ref{i} the $n-1$ frontier arcs of $I_1$ (not shared with $H$) can only be sent to those of $H$ and those of $H_1$ (not shared with resp. $H_1$ and $H$). Thus there exists $\chi$ homotopic to $\pi$ such that $\chi(M)=M$. Hence we have $[M,\id]=[M,\pi]$. In the same way, we obtain $[M,\phi]=[M,\id]$, and so the latter vertex completes the $3$-corner into a $3$-cube.

        Thirdly, let $m=2$, $n\geq 2$ and let $h(\Sigma)= 1$. A priori, we have two possible configurations: either the $H_i$'s (resp. $I_i$'s, $J$'s) are not adjacent to $M$ or one of them is. Assume that, for instance $[\Sigma \cup H_1 \cup H_3, \id]$ has the second configuration. Then, since $m=2$ and $n\geq 2$, there exists $\psi \in \amod^*(A_{n,m})$ rigid outside $\Sigma$ which permutes cyclically the frontier arcs of $\Sigma$ so that $[\Omega \cup H_1' \cup H_3', \psi^{-1}]$ has the first configuration, where $\Omega:=\psi(\Sigma \cup H_1 \cup H_3)$. Hence, $[\Sigma \cup H_1 \cup H_3, \id]=[\Omega \cup H_1' \cup H_3', \psi^{-1}]$ where $\psi(H_i)=H_i'$ and so we reduce our work to the first configuration. In this case, with a reasoning similar to the case $m=1$ (excepting that $\Sigma=M\cup H$ may differ to $\Gamma=M\cup H'$), we complete the $3$-corner by adding $[M,\id]$. 
\end{proof}

\subsection{When is  \texorpdfstring{$\Cp(A_{n,m})$}{TEXT} a \texorpdfstring{$\CAT$}{TEXT} cube complex?} 
      \begin{figure}
    \centering
    \includegraphics[scale=0.2]{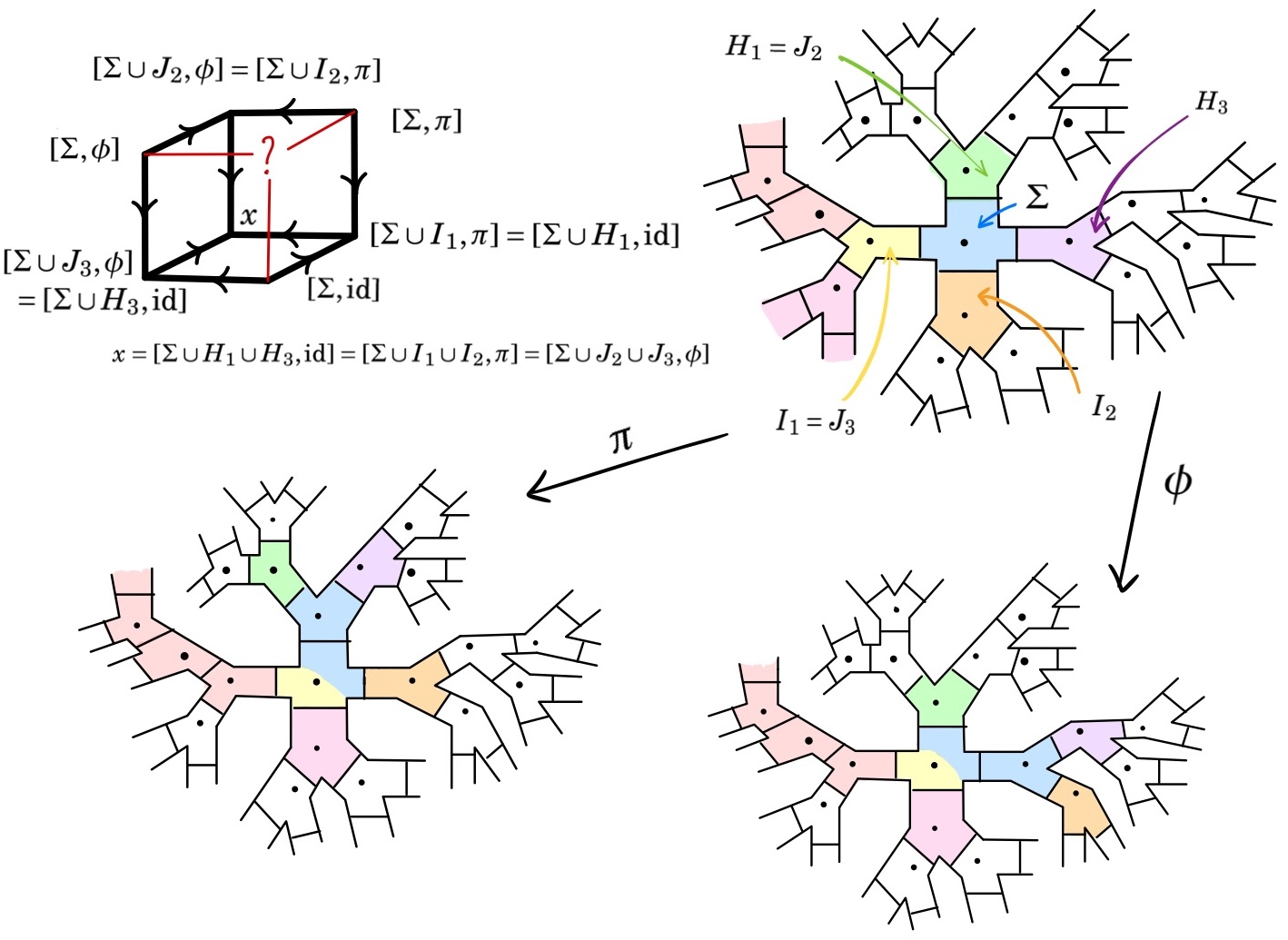}
    \caption{ }
    \label{Fig_2.12}
\end{figure}
 In this section, we determine for which sub-family of $A_{n,m}$ the associated cube $\Cp(A_{n,m})$ is $\CAT$. Let us first consider some examples.
 
\begin{example}\label{exA12}
  The cube complexes $\Cp(A_{1,1})$ and  $\Cp(A_{1,2})$ are $\CAT$. The two cases are similar so we consider $\Cp(A_{1,2})$. By Theorem 3.3 in \cite{asymI}, $\Cp(A_{1,2})$ is simply connected. By Lemma \ref{lemmaK23}, there is no copy of $K_{2,3}$. By definition we do not have $1$-corners and by Lemma \ref{lem3cornerA} it only remains to show that in $\Cp(A_{1,2})$ every $3$-corner with an attracting root can be completed into a $3$-cube. This follows from the fact that $\Cp(A_{1,2})$ is a square-complex (there are only two ways to remove or add adjacent polygons). Since $\pi(I_2)=H_3$ and $\pi$ is rigid outside $\Gamma \cup \I_1$ one cannot have $\pi(I_1) \subseteq \Sigma$. Thus, $\pi(I_1)=H_1$, $\pi(\Gamma)=\Sigma$ and so $[\Sigma, \id]=[\Gamma, \pi]$. Consequently, there are no $3$-corners. And we conclude that $\Cp(A_{1,2})$ is $\CAT$. Similarly one can show that $\Cp(A_{1,1})$, $\Dp(A_{1,1})$ and $\Dp(A_{1,2})$ are $\CAT$.
\end{example}

\begin{remark}\label{CA1M}
    As noted in \cite{asymI}, $\Cp(A_{1,3})$ is not $\CAT$, and this observation can be extended to $A_{1,m}$ for $m\geq 3$.
\end{remark}

Consider the cube complex $\Cp(A_{n,m})$ and denote by $R_1,\dots R_m$ the $m$ trees branching out the central polygon $M$. An \emph{infinite ray of polygons} $L$ in $R_j$ is a semi-infinite chain of polygons $L=(A_{j}^1,A_{j}^2,\dots)$ starting from the polygon adjacent to $M$ in $R_j$ and such that $A_{j}^k$ and $A_{j}^{k+1}$ are adjacent for all $k\geq1$.

\begin{example}\label{nonex2}
    For $m > n+1$ the cube complex $\Cp(A_{n,m})$ is not $\CAT$. The key obstacle here is that there is no vertices in $\Cp(A_{n,m})$ of height $0$. Consider the $3$-corner in $\Cp(A_{n,m})$ with the same notation as in Figure \ref{Fig10}. We construct a $3$-corner with $3$ vertices of height $1$, $3$ of height $2$, and one of height $3$ so that, by Corollary \ref{heightcube} the only way to complete it to a $3$-cube would be to add a vertex of height $0$. We take the following vertices: let $\Sigma$, $\Delta$ and $\Gamma$ be the central polygon $M$ and  let $I_1=J_3$ (resp. $H_1=J_2$, $H_3=I_2$) be the polygons in $R_1$ (resp. $R_2$, $R_3$) adjacent to the central polygon. See Figure \ref{Fig_2.12}.
    
    Fix three \emph{infinite rays} of polygons $L_1=(A_{1}^1,A_{1}^2,..)$, (resp. $L_2$, resp. $L_3$) in $R_1$ (resp. $R_2$, resp. $R_3$). The ray $L_i$ is built recursively as follows. Start by letting $A_{i}^1$ be the polygon adjacent to $M$ and lying in the branch $R_i$. Then as $A_{i}^2$ take the adjacent polygon whose puncture corresponds to the right-most child of the vertex corresponding to the puncture of $A_{i}^1$. Then we iterate the process. See Figure \ref{Figures Article-50} for an illustration of the infinite ray $L_1$.
    \begin{figure}[h]
    \centering
    \includegraphics[scale=0.1]{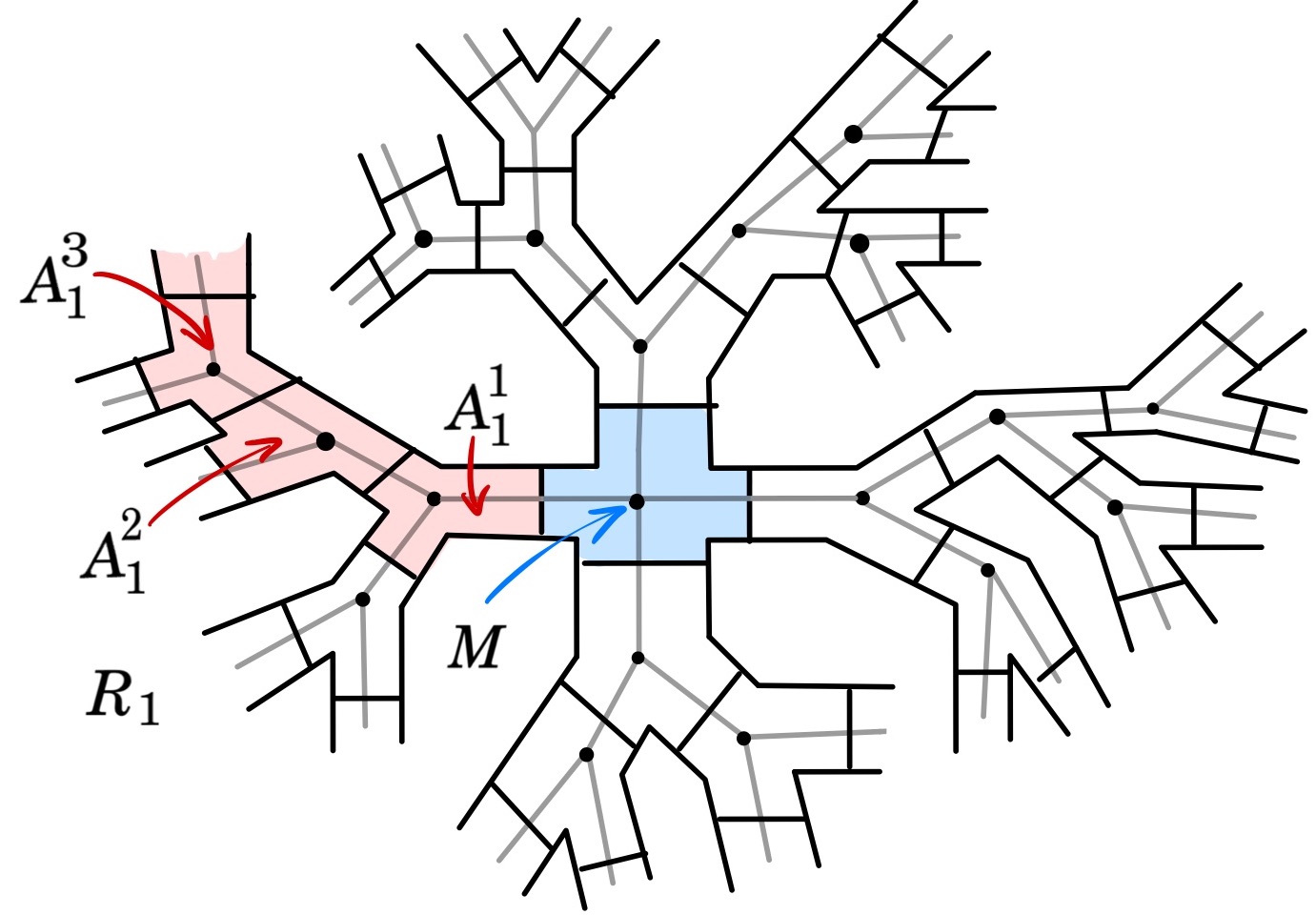}
    \caption{ }
    \label{Figures Article-50}
\end{figure} 
    Let $\pi$ be an asymptotically rigid homeomorphism that shifts each polygon of $L_1 \cup M \cup L_2$ by one polygon such that $\pi(A_{1}^1)$ lies in the central polygon. Let $\phi$ be an asymptotically rigid homeomorphism that shifts each polygon of $L_1 \cup M \cup L_3$ by one polygon such that $\phi(A_{1}^1)$ is the central polygon. See Figure \ref{Fig_2.12} for an illustration with $m=4$ and $n=2$.
    Then by construction, we cannot complete this $3$-corner into a $3$-cube because we do not have a vertex of height zero. So the link of the vertex $x=[\Sigma \cup H_1 \cup H_3, \id]$ represented in  Figure \ref{Fig_2.12} is not flag and $\Cp(A_{n,m})$ is not $\CAT$ for $m > n+1$
\end{example}

\begin{theorem}\label{Prop1}
    Let $n\geq 1$,  the cube complex $\Cp(A_{n,m})$ is $\CAT$ if and only if $1 \leq m \leq n+1$.
\end{theorem}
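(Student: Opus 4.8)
The plan is to deduce the statement from Proposition~\ref{Prop2}. Since by Proposition~\ref{cubecomp} the complex $\Cp(A_{n,m})$ is the cube-completion $\Gamma_{A_{n,m}}^{\square}$ of its $1$-skeleton, it is enough, for the forward implication, to verify the four hypotheses of Proposition~\ref{Prop2} for $X=\Gamma_{A_{n,m}}$ under the assumption $1\le m\le n+1$, and for the converse to exhibit a vertex whose link fails to be flag when $m>n+1$.

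For the forward implication I would check the hypotheses one at a time. Hypothesis~\ref{hyp1} (simple connectivity) is Theorem~3.3 of \cite{asymI}. Hypothesis~\ref{hyp2} (no $1$-corner) holds by construction, since an edge of $\Cp(A_{n,m})$ adds exactly one polygon. Hypothesis~\ref{hyp4} (no copy of $K_{3,2}$) is Lemma~\ref{lemmaK23}. The only remaining work is Hypothesis~\ref{hyp3}, the $3$-cube condition. I would split the $3$-corners into those with a non-attracting root, handled by Lemma~\ref{lem3cornerA} with no restriction on $m$, and those with an attracting root, handled by Proposition~\ref{prop3corner}, which is where the hypothesis $m\le n+1$ enters. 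With all four hypotheses in hand, Proposition~\ref{Prop2} gives that $\Cp(A_{n,m})$ is $\CAT$.

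For the converse, note first that the constraint $m\ge1$ is built into the definition of $A_{n,m}$, so I only need to show that $\Cp(A_{n,m})$ is not $\CAT$ when $m>n+1$. This is the content of Example~\ref{nonex2}: one builds, using three well-chosen infinite lines of polygons $L_1,L_2,L_3$ in $R_1,R_2,R_3$ and two shifting homeomorphisms $\pi,\phi$, a $3$-corner with attracting root $x=[\Sigma\cup H_1\cup H_3,\id]$ for which $\Sigma=\Gamma=\Delta=M$ has height $0$. A vertex completing this corner into a $3$-cube would have height $h(\Sigma)-1$, which is negative and hence impossible, so the link of $x$ contains a triangle of squares that spans no $3$-cube; the link is not flag and $\Cp(A_{n,m})$ is not $\CAT$. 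Together with the forward implication this proves the equivalence.

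The hard part is the attracting-root case of the $3$-cube condition, i.e.\ Proposition~\ref{prop3corner}. The obstacle is that completing such a $3$-corner forces one to produce a vertex of strictly smaller height, which means sliding the polygon $\pi(I_1)\subseteq\Sigma$ out to the boundary via Lemma~\ref{lemmaextrem}; this needs both the height bound $h(\Sigma)\ge2$ and a frontier-arc count, and both ingredients only go through when $m\le n+1$ — when $m>n+1$ the central polygon carries too many frontier arcs, the inequality $\sharp\Fr(\Sigma)-2\ge\sharp\Fr(I_1)-1$ from Lemma~\ref{fact1} and Lemma~\ref{claimheight} can fail, and the counterexample in Example~\ref{nonex2} is exactly the geometric shadow of this failure.
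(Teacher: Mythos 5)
Your proposal is correct and follows essentially the same route as the paper: Proposition~\ref{cubecomp} plus Proposition~\ref{Prop2}, with simple connectivity from Theorem~3.3 of \cite{asymI}, no $1$-corners by definition, Lemma~\ref{lemmaK23} for $K_{2,3}$, and the $3$-cube condition split into Lemma~\ref{lem3cornerA} (non-attracting roots) and Proposition~\ref{prop3corner} (attracting roots, where $m\leq n+1$ enters), with Example~\ref{nonex2} giving the converse; the paper merely routes the case $n=1$ through Example~\ref{exA12} and Remark~\ref{CA1M} instead of the general statements you invoke. One small correction to your summary of Example~\ref{nonex2}: in $\Cp(A_{n,m})$ every polygon, including the central one, carries a puncture, so $h(M)=1$ and the missing eighth vertex would have height $0$; the obstruction is that $\Cp(A_{n,m})$ has no height-zero vertex, not that the height would be negative.
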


\begin{proof}
    Let $n\geq 1$ and $1 \leq m \leq n+1$. The case $n=1$ is given by Example~\ref{exA12}. By Theorem 3.3 in \cite{asymI}, $\Cp(A_{n,m})$ is contractible and thus simply connected. Proposition \ref{cubecomp} allows us to use Proposition \ref{Prop2} with $\Cp(A_{n,m})$ in order to prove Theorem \ref{Prop1}. Thus, we need to show that $\Cp(A_{n,m})$ satisfies the assumptions in Proposition \ref{Prop2}. 
     By definition we do not have $1$-corners and by Lemma \ref{lemmaK23} there is no $2$-corner. By Proposition \ref{prop3corner} every $3$-corners with an attracting root can be completed into a $3$-cube, the same holds for non-attracting root by Lemma \ref{lem3cornerA}. By Proposition \ref{Prop2} we conclude that $\Cp(A_{n,m})$ is $\CAT$.
    
    For the converse, Let $m>n+1$, if $n=1$ by Remark \ref{CA1M} $\Cp(A_{1,m})$ is not $\CAT$ for $m\geq 3$. Then if $n>1, m>n+1$, $\Cp(A_{n,m})$ is not $\CAT$ by Example \ref{nonex2}.

    This concludes the proof of Theorem \ref{Prop1}.
\end{proof}

\begin{remark}
    Let $m,n \geq 1$, the subcomplex of $\Cp(A_{n,m})$ generated by the vertices of the form $[\Sigma,\phi]$ where $\Sigma$ is an admissible surface containing the central polygon is called the \emph{spine}. This contractible complex was first introduce in \cite{asymI} and is denoted by $\Sp\Cp (A_{n,m})$. With a similar work we can show that $\Sp\Cp(A_{n,m})$ is $\CAT$ if and only if $n\geq 1$ and $1 \leq m \leq n+1$.
\end{remark}

\subsection{\texorpdfstring{\texorpdfstring{$\CAT$}{TEXT}ness of the cube complex $\Dp(A_{n,m})$}{TEXT}}
     
First, we show that $\Dp(A_{n,m})$ is contractible. To do this we follow closely the proof of the contractibility of $\Cp(A_{n,m})$ in Section 3.2 \cite{asymI}. Let $x$ be a vertex in $\Dp(A_{n,m})$ and $\Sigma \subseteq \Sp^*(A_{n,m})$ be an admissible surface. If there exists a finite path of vertices $x_0,x_1,\dots x_n$ from $x_0=x$ to $x_n=[\Sigma, \id]$ such that $h(x_i)>h(x_{i-1})$, one says that $[\Sigma, \id]$ \emph{dominates} $x$. Let $\mathcal{S}$ be a finite collection of vertices in $\Dp(A_{n,m})$. We denote by $X(\mathcal{S},\Sigma)$ the subcomplex in $\Dp(A_{n,m})$ generated by the height-increasing paths from a vertex in $\mathcal{S}$ to $[\Sigma, \id]$.

\begin{lemma}[\cite{asymI} Claim 3.6]\label{lemmadom}
    Let $\mathcal{S}$ be a finite collection of vertices in $\Dp(A_{n,m})$. Then, there exists an admissible surface $\Sigma$ such that $[\Sigma, \id]$ dominates all the vertices in $\mathcal{S}$.
\end{lemma}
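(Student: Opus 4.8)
The plan is to follow closely the argument for Claim~3.6 in \cite{asymI}, in two steps: first replace each vertex of $\mathcal{S}$ by a vertex of the form $[\Omega,\id]$ lying above it, and then amalgamate these finitely many admissible surfaces into a single one.

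\textbf{Step 1: reduction to the identity.} Fix a vertex $x=[\Sigma_x,\phi_x]\in\mathcal{S}$. I would enlarge $\Sigma_x$ to an admissible surface $\Sigma_x'$ containing the central polygon $M$, the support $\supp^*\phi_x$, and the polygon $\phi_x^{-1}(M)$. This is possible because the polygon-adjacency graph of $\Sp^*(A_{n,m})$ is the tree $A_{n,m}$, so the (finite) convex hull of these polygons together with $M$ is an admissible surface containing $M$, and any admissible surface containing $M$ can be grown into a larger one by attaching one adjacent polygon at a time. Each such elementary attachment is, by definition, an edge of $\Dp(A_{n,m})$, and it raises the height by exactly one (a newly attached polygon is non-central, hence carries one new puncture). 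Thus there is a strictly height-increasing path from $x$ to $[\Sigma_x',\phi_x]$. Since $\Sigma_x'\supseteq\supp^*\phi_x$, the homeomorphism $\phi_x$ is rigid outside $\Sigma_x'$ and maps $\Sigma_x'$ onto $\Omega_x:=\phi_x(\Sigma_x')$; this image is a union of polygons of the rigid structure (the image of a support is admissible, and outside the support polygons go to polygons), hence an admissible surface, and it contains $M$ because $\phi_x^{-1}(M)\subseteq\Sigma_x'$. Therefore $[\Sigma_x',\phi_x]=[\Omega_x,\id]$ as vertices of $\Dp(A_{n,m})$, and so $[\Omega_x,\id]$ dominates $x$.

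\textbf{Step 2: amalgamation.} Applying Step~1 to every $x\in\mathcal{S}$ produces admissible surfaces $\Omega_x$, each containing $M$, with $[\Omega_x,\id]$ dominating $x$. Put $\Sigma:=\bigcup_{x\in\mathcal{S}}\Omega_x$; being a finite union of admissible surfaces all containing $M$, $\Sigma$ is again admissible and contains $M$. For each $x$ one has $\Omega_x\subseteq\Sigma$, so, exactly as in Step~1, growing $\Sigma$ from $\Omega_x$ one adjacent polygon at a time yields a strictly height-increasing path from $[\Omega_x,\id]$ to $[\Sigma,\id]$. Concatenating it with the domination path from $x$ to $[\Omega_x,\id]$ gives a strictly height-increasing path from $x$ to $[\Sigma,\id]$, so $[\Sigma,\id]$ dominates every vertex of $\mathcal{S}$.

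The argument is essentially bookkeeping; the one place where some care is needed is Step~1, where one must check that $\phi_x(\Sigma_x')$ really is an admissible surface \emph{containing $M$} — this is the reason for also absorbing $\phi_x^{-1}(M)$ into $\Sigma_x'$, a step absent from the analogous argument for $\Cp(A_{n,m})$ in \cite{asymI}, where admissible surfaces are not required to contain a distinguished polygon. Apart from this, the proof transcribes the one for $\Cp(A_{n,m})$ verbatim.
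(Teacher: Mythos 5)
Your proof is correct and follows essentially the same route as the paper: enlarge each vertex's surface to contain the support of its homeomorphism, pass to a vertex of the form $[\Omega,\id]$, and then take one admissible surface containing all the $\Omega$'s, connecting everything by height-increasing paths that add one adjacent polygon at a time. Your extra step of absorbing $\phi_x^{-1}(M)$ is harmless but in fact automatic: since $M$ is the only polygon of $\Sp^*(A_{n,m})$ without a puncture and homeomorphisms preserve punctures, the image of any admissible surface containing $M$ and the support already contains $M$.
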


\begin{proof}
    Let $\mathcal{S}=([\Delta_i, \phi_i])_{i \in I}$ and let $\Sigma_i$ be the admissible surface supporting $\phi_i$ for each $i \in I$. we denote by $\Omega_i$ the smallest admissible surface such that $\Sigma_i \cup \Delta_i \subseteq \Omega_i$, for each $i \in I$. Then let $\Sigma$ be an admissible surface containing all the $\phi(\Omega_i)$. One obtains a height-increasing path from $[\Delta_i, \phi_i]$ to $[\Omega_i, \phi_i]$ by adding polygons. Similarly, by definition of $\Sigma$, one has a second height-increasing path from $[\Omega_i, \phi_i]$ to $[\Sigma, \id]$. By concatenating the two paths we obtain a height-increasing path from $[\Delta_i, \phi_i]$ to $[\Sigma, \id]$. Thus $[\Sigma, \id]$ dominates all the vertices in $\mathcal{S}$.
\end{proof}

For the next lemma, the proof is the same steps as in Section 3.2 of \cite{asymI}, where the case of $\Cp(A_{n,m})$ is considered. We review the proof. 

\begin{lemma}\label{lemmaXcontr}
    Let $\mathcal{S}$ be a finite collection of vertices in $\Dp(A_{n,m})$. Then, there exists an admissible surface $\Sigma$ such that $X(\mathcal{S},\Sigma)$ is contractible.
\end{lemma}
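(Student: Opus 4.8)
The plan is to follow the argument from Section 3.2 of \cite{asymI} adapted to $\Dp(A_{n,m})$, using Lemma \ref{lemmadom} to produce a dominating vertex and then showing that the subcomplex of height-increasing paths can be contracted onto it. First I would invoke Lemma \ref{lemmadom} to obtain an admissible surface $\Sigma$ (containing the central polygon of $\Sp^*(A_{n,m})$) such that $[\Sigma,\id]$ dominates every vertex of $\mathcal S$; this is the $\Sigma$ for which we will prove $X(\mathcal S,\Sigma)$ is contractible. By construction every vertex of $X(\mathcal S,\Sigma)$ lies on a height-increasing edge-path terminating at the top vertex $[\Sigma,\id]$, and by Lemma \ref{lemma3.4} each such edge corresponds to attaching a single adjacent polygon.

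Next I would set up the deformation retraction by descending induction on the height gap $h(\Sigma)-h(x)$, or equivalently by a Morse-type collapse from the bottom. The key combinatorial point, inherited from Proposition \ref{cubecomp} together with Lemmas \ref{lemma3.4}, \ref{case2cube} and the claim inside Proposition \ref{cubecomp}, is that the "upward link" of any vertex $x=[\Gamma,\psi]$ inside $X(\mathcal S,\Sigma)$ — the set of polygons one may attach to $\Gamma$ so as to stay on a height-increasing path to $[\Sigma,\id]$ — spans a simplex (a cube in $\Dp(A_{n,m})$), because any collection of such polygons is simultaneously attachable and the resulting vertices are forced to agree with those prescribed by $[\Sigma,\id]$. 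Concretely, if $x\neq[\Sigma,\id]$ is a vertex of $X(\mathcal S,\Sigma)$ that is not dominated "from strictly above within $X$", one shows its upward star is a cone and collapses it; iterating over vertices in order of increasing height gives a discrete sequence of elementary collapses deformation-retracting $X(\mathcal S,\Sigma)$ onto the single vertex $[\Sigma,\id]$. This is exactly the structure of Claim 3.7 (or its analogue) in \cite{asymI}, and the only thing to check is that none of the steps used properties of the $\Sp^\sharp$-rigid structure that fail for $\Sp^*$; since admissibility, height, frontier arcs, adjacency, and Lemma \ref{lemma3.4} all transfer verbatim (the central polygon simply has height $0$ instead of $1$), the argument goes through unchanged.

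The main obstacle I anticipate is the verification that the upward links are indeed simplices — i.e. that if $H_1,\dots,H_k$ are polygons adjacent to $\Gamma$ each of which extends a height-increasing path in $X(\mathcal S,\Sigma)$ to $[\Sigma,\id]$, then the whole cube on $\{[\Gamma\cup\bigcup_{i\in I}H_i,\psi]\}$ sits inside $X(\mathcal S,\Sigma)$. This requires knowing that attaching $H_i$ and $H_j$ together is compatible with the target $[\Sigma,\id]$, which follows because $[\Sigma,\id]$ dominates the relevant vertices and the identifications along the two edges out of $[\Gamma,\psi]$ force, via Lemma \ref{case2cube} applied to $\Dp(A_{n,m})$, the existence of the filling square; and then Proposition \ref{cubecomp} upgrades this to full cubes. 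One also needs the straightforward but slightly technical observation that $X(\mathcal S,\Sigma)$ is a genuine subcomplex (closed under passing to faces of its cubes), which is immediate once the upward-link description is established. Having reduced everything to these links, the contractibility of $X(\mathcal S,\Sigma)$ follows from the standard fact that a cube complex which collapses, cube by cube, onto a point is contractible.
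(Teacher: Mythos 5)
Your overall strategy is the paper's: use Lemma \ref{lemmadom} to get a dominating vertex $[\Sigma,\id]$, show that the neighbours of a minimal-height vertex $x=[\Delta,\phi]$ of $X(\mathcal S,\Sigma)$ span a cube lying entirely in $X(\mathcal S,\Sigma)$, and collapse from the bottom, iterating until only $[\Sigma,\id]$ remains. But the one step that carries real content --- the claim that the whole cube on $\{[\Delta\cup\bigcup_{i\in I}H_i,\phi]\mid I\subseteq\{1,\dots,k\}\}$ is contained in $X(\mathcal S,\Sigma)$ --- is not actually proved in your proposal. You justify it by saying that $[\Sigma,\id]$ ``dominates the relevant vertices'' and that Lemma \ref{case2cube} forces the filling square, with Proposition \ref{cubecomp} upgrading to higher cubes. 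This is off-target: the existence of the cube in $\Dp(A_{n,m})$ is automatic from the definition of the complex (any set of distinct polygons adjacent to $\Delta$ spans a cube), and Lemma \ref{case2cube} and Proposition \ref{cubecomp} only concern cubes already present in $\Dp(A_{n,m})$. What has to be shown is membership in the subcomplex $X(\mathcal S,\Sigma)$, i.e.\ that a vertex such as $[\Delta\cup H_i\cup H_j,\phi]$ lies on a height-increasing path from $\mathcal S$ to the \emph{specific} vertex $[\Sigma,\id]$. That $[\Sigma,\id]$ dominates the vertices of $\mathcal S$ does not by itself give this, and asserting that the joint attachment is ``compatible with the target'' is precisely the point at issue, not a consequence of the lemmas you cite.

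The paper closes this gap with a short but essential argument: a height-increasing path from $x$ to $[\Sigma,\id]$ gives $[\Sigma,\id]=[\Delta\cup J_1\cup\dots\cup J_n,\phi]$, while a height-increasing path from $x_i=[\Delta\cup H_i,\phi]$ gives $[\Sigma,\id]=[\Delta\cup H_i\cup M_2\cup\dots\cup M_l,\phi]$; comparing the two expressions over the same $\Delta$ and $\phi$ forces each $H_i$ to be one of the $J_j$'s. Hence one may attach all of $H_1,\dots,H_k$ first (in any order) and then the remaining $J_j$'s, producing a height-increasing path from $x$ to $[\Sigma,\id]$ that passes through every vertex of the cube, which is exactly what places the cube inside $X(\mathcal S,\Sigma)$ and makes the star of $x$ a cone that can be collapsed. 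Your write-up names the desired conclusion (``the resulting vertices are forced to agree with those prescribed by $[\Sigma,\id]$'') but supplies no mechanism for it; without this comparison of decompositions the descending collapse is not justified. The rest of your outline (that $X$ is a subcomplex, that the iteration terminates by finiteness, and that the argument transfers verbatim from $\Sp^\sharp$ to $\Sp^*$) matches the paper and is fine.
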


\begin{proof}
     Let $\mathcal{S}$ be a finite collection of vertices in $\Dp(A_{n,m})$. By Lemma \ref{lemmadom} there exists an admissible surface $\Sigma$ such that $[\Sigma, \id]$ dominates all the vertices in $\mathcal{S}$. If $X(\mathcal{S},\Sigma)=[\Sigma, \id]$ we are done. Assume that $X(\mathcal{S},\Sigma) \neq [\Sigma, \id]$. Now, take a vertex $x \in X(\mathcal{S},\Sigma)$ with minimal height. Since $X(\mathcal{S},\Sigma) \neq [\Sigma, \id]$, $x$ admits $x_1,\dots,x_k$ as neighbours in $X(\mathcal{S},\Sigma)$. By Lemma \ref{lemma3.4} we assume that $x=[\Delta, \phi]$ and $x_i= [\Delta \cup H_i, \phi]$ for $1 \leq i \leq k$ where $\Delta$ is an admissible surface and $H_1,\dots,H_k$ are some adjacent polygons. 
     \begin{claim}
         The cube generated by the vertices $\{ [\Delta \cup_{i \in I} H_i, \phi] \}_{I \subseteq \{1,\dots,k\}}$ lies in $X(\mathcal{S},\Sigma)$
     \end{claim}

     \begin{proof}
         One has a height-increasing path from $x$ to $[\Sigma, \id]$. Thus by Lemma \ref{lemma3.4} $[\Sigma, \id]=[\Delta \cup J_1 \cup \dots \cup J_p, \phi]$ where $J_1,\dots,J_p$ are some polygons. Similarly since one also has an increasing path from $x_i=[\Delta \cup H_i, \phi]$ to $[\Sigma, \id]$, by Lemma \ref{lemma3.4}, $[\Sigma, \id]=[\Delta \cup H_i \cup M^i_2 \cup \dots \cup M^i_l, \phi]$ where $M^i_2,\dots,M^i_l$ are some polygons. To sum up: $$[\Sigma, \id]=[\Delta \cup J_1 \cup \dots \cup J_p, \phi]=[\Delta \cup H_i \cup M^i_1 \cup \dots \cup M^i_l, \phi],$$ so $l=p$ and $J_i=H_{\sigma(i)}$ for some permutation $\sigma$. Hence, one can add to $\Delta$ the adjacent polygons $H_i$ and next the remaining $J_i$  to obtain a path from $x$ to $[\Sigma, \id ]$ passing through the $\{ [\Delta \cup_{i \in I} H_i | I \subseteq \{1,\dots,k\} ] \}$. 
     \end{proof}

    In particular, all the direct neighbours $x_i$'s of $x$ in $X(\mathcal{S},\Sigma)$ are contained in a cube in $X(\mathcal{S},\Sigma)$. Thus the complex $X(\mathcal{S},\Sigma)$ deformation retracts onto $X((\mathcal{S}\smallsetminus\{x\})\cup \{x_1,\dots,x_k\},\Sigma)$. Since $[\Sigma, \id]$ dominates the vertices in $(\mathcal{S}\smallsetminus \{x\})\cup \{x_1,\dots,x_k\}$ we iterate the process. Hence we find a sequence: $$X(\mathcal{S},\Sigma) \supset X(\mathcal{S}_1,\Sigma) \supset \dots. $$
    This sequence stops since $X(\mathcal{S},\Sigma)$ contains finitely many cells. Hence we obtain from a certain rank $k$, $X(\mathcal{S}_k,\Sigma)=[\Sigma, \id]$ for some finite collection $\mathcal{S}_k$ dominated by $[\Sigma, \id]$. Thus $X(\mathcal{S},\Sigma)$ deformation retracts onto $[\Sigma, \id]$ so is contractible
\end{proof}

\begin{proposition}\label{Dcontr}
    Let $m,n\geq 1$ the cube complexe $\Dp(A_{n,m})$ is contractible.
\end{proposition}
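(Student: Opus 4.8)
The plan is to mimic the standard Brown-type argument for showing a complex built from ``increasing paths'' is contractible, exactly as was done for $\Cp(A_{n,m})$ in \cite{asymI}, using the two lemmas just established. The key observation is that $\Dp(A_{n,m})$ is the increasing union of the subcomplexes $X(\mathcal{S},\Sigma)$ as $\mathcal{S}$ ranges over all finite collections of vertices (and $\Sigma$ over suitable dominating surfaces), and each such subcomplex is contractible by Lemma~\ref{lemmaXcontr}.

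First I would check that $\Dp(A_{n,m})$ is connected: given any two vertices $x,y$, apply Lemma~\ref{lemmadom} to the collection $\mathcal{S}=\{x,y\}$ to obtain an admissible surface $\Sigma$ with $[\Sigma,\id]$ dominating both, which in particular produces height-increasing paths from $x$ and from $y$ to $[\Sigma,\id]$; concatenating gives a path from $x$ to $y$. Next, I would exhaust $\Dp(A_{n,m})$ by a nested sequence of contractible subcomplexes. Concretely, enumerate the (countably many) vertices as $v_1,v_2,\dots$; for each $N$ set $\mathcal{S}_N=\{v_1,\dots,v_N\}$ and, by Lemma~\ref{lemmadom}, choose a dominating surface, then by Lemma~\ref{lemmaXcontr} choose $\Sigma_N$ so that $X_N := X(\mathcal{S}_N,\Sigma_N)$ is contractible. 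One must arrange these so that $X_N\subseteq X_{N+1}$: since domination is monotone under enlarging $\Sigma$ (a height-increasing path from a vertex to $[\Sigma,\id]$ extends, by adding polygons, to one ending at $[\Sigma',\id]$ whenever $\Sigma\subseteq\Sigma'$), one can always replace $\Sigma_{N+1}$ by a common admissible surface containing both $\Sigma_N$ and the surface produced at stage $N+1$, and re-run Lemma~\ref{lemmaXcontr}; then $X_N \subseteq X_{N+1}$ holds because every height-increasing path counted in $X_N$ is still a height-increasing path to the larger $[\Sigma_{N+1},\id]$. Finally, $\Dp(A_{n,m})=\bigcup_N X_N$ since every vertex lies in some $\mathcal{S}_N$ hence in $X_N$, and every cell of $\Dp(A_{n,m})$—which by Proposition~\ref{cubecomp} is a genuine cube of the form $\{[\Delta\cup\bigcup_{i\in I}H_i,\phi]\}$—has all its vertices dominated by a common $[\Sigma,\id]$ and, by the Claim in the proof of Lemma~\ref{lemmaXcontr}, lies entirely inside the corresponding $X(\mathcal{S},\Sigma)$.

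With this exhaustion in hand, contractibility follows formally: a continuous map of a sphere $S^k$ into $\Dp(A_{n,m})$ has compact image, hence lands in some $X_N$ (using that the $X_N$ are subcomplexes and the complex has the weak topology), where it is nullhomotopic since $X_N$ is contractible; thus all homotopy groups of $\Dp(A_{n,m})$ vanish. Since $\Dp(A_{n,m})$ is a CW complex, Whitehead's theorem gives contractibility. Alternatively, and perhaps more in the spirit of \cite{asymI}, one can note that a nested union of contractible subcomplexes with the property that each deformation retracts compatibly is itself contractible.

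The main obstacle I anticipate is the bookkeeping needed to make the exhaustion $X_N\subseteq X_{N+1}$ genuinely nested: Lemma~\ref{lemmaXcontr} only asserts the existence of \emph{some} $\Sigma$ working for a given $\mathcal{S}$, so one has to argue that enlarging $\Sigma$ preserves both the domination property and the contractibility conclusion (re-running the deformation-retraction argument of Lemma~\ref{lemmaXcontr} verbatim for the larger $\Sigma$), and that the union over the nested $X_N$ is all of $\Dp(A_{n,m})$. None of this is deep, but it is the part that requires care; the actual contractibility of each $X_N$ is already done. I would present this as: ``The proof follows the same scheme as the contractibility of $\Cp(A_{n,m})$ in Section 3.2 of \cite{asymI}: write $\Dp(A_{n,m})$ as an increasing union of the contractible subcomplexes $X(\mathcal{S},\Sigma)$ furnished by Lemma~\ref{lemmaXcontr} and conclude.''
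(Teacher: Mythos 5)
Your proposal is correct and follows essentially the same route as the paper: the paper's proof likewise takes a continuous map of a sphere into $\Dp(A_{n,m})$, notes its image lies in a compact subcomplex with a finite vertex set $\mathcal{S}$, invokes Lemma~\ref{lemmadom} to get a dominating $[\Sigma,\id]$ and Lemma~\ref{lemmaXcontr} to see the image lies in the contractible subcomplex $X(\mathcal{S},\Sigma)$, and concludes by Whitehead's theorem. The nested exhaustion $X_N\subseteq X_{N+1}$ you construct is harmless but not needed, since compactness of each individual sphere image already suffices, so the paper dispenses with that bookkeeping.
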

\begin{proof}
     Let $n\geq 1$ and $\psi:\Sph^n \longrightarrow \Dp(A_{n,m})$ be a continuous map. The image of $\psi$ lies in compact subcomplex which has a finite collection of vertices $\mathcal{S}$. By Lemma \ref{lemmadom}, there exists an admissible surface $\Sigma$ such that $[\Sigma, \id]$ dominates all the vertices in $\mathcal{S}$. Thus the image of $\psi$ lies in $X(\mathcal{S},\Sigma)$ which is contractible by Lemma \ref{lemmaXcontr}. Hence $\psi$ is homotopically trivial. By Whitehead's theorem we deduce that $\Dp(A_{n,m})$ is contractible. 
\end{proof}

\begin{proposition}\label{DpisCAT0}
     Let $m, n\geq 1$ then the cube complex $\Dp(A_{n,m})$ is $\CAT$.
\end{proposition}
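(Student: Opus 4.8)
The plan is to apply Proposition \ref{Prop2} to the underlying graph of $\Dp(A_{n,m})$ exactly as was done for $\Cp(A_{n,m})$ in the proof of Theorem \ref{Prop1}, but now taking advantage of the fact that all the obstruction-removing lemmas have been stated and proved uniformly for $\Dp(A_{n,m})$ as well. Concretely, I would verify the four hypotheses of Proposition \ref{Prop2} in turn. First, hypothesis \ref{hyp1}: the cube completion $\Dp(A_{n,m})^{\square}$ equals $\Dp(A_{n,m})$ by Proposition \ref{cubecomp}, and $\Dp(A_{n,m})$ is simply connected because it is contractible by Proposition \ref{Dcontr}. Second, hypothesis \ref{hyp2}: there are no $1$-corners, by definition of the complex (noted just after Proposition \ref{Prop2}). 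Third, hypothesis \ref{hyp4}: there is no copy of $K_{2,3}$ (equivalently no $2$-corner) in the $1$-skeleton, by Lemma \ref{lemmaK23}.

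The remaining point is hypothesis \ref{hyp3}, the $3$-cube condition: every $3$-corner must span a $3$-cube. A $3$-corner has either a non-attracting root or an attracting root. For a non-attracting root, Lemma \ref{lem3cornerA} — proved uniformly for $\Dp(A_{n,m})$ — shows the $3$-corner completes to a $3$-cube. For an attracting root, Proposition \ref{prop3corner} states precisely that every $3$-corner with an attracting root in $\Dp(A_{n,m})$ can be completed into a $3$-cube, for all $m,n\geq 1$ (this is the unconditional half of that proposition, as opposed to the conditional statement for $\Cp(A_{n,m})$). Hence hypothesis \ref{hyp3} holds. Applying Proposition \ref{Prop2} then yields that $\Dp(A_{n,m})^{\square}=\Dp(A_{n,m})$ is a $\CAT$ cube complex.

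I do not expect any genuine obstacle here, since all the work has been front-loaded into the lemmas: the content of this proof is simply the bookkeeping of checking that each hypothesis of Proposition \ref{Prop2} has already been established for $\Dp(A_{n,m})$. If I had to point to the one step that carries the real weight, it is the invocation of Proposition \ref{prop3corner} for attracting roots, whose proof required the case analysis on $m\in\{1,2\}$ versus $m\geq 3$ and the ``push a strand into another strand'' manoeuvres; but that labour is already done upstream. The only care needed in writing this proof is to cite contractibility from Proposition \ref{Dcontr} rather than from \cite{asymI}, since the contractibility of $\Dp(A_{n,m})$ (as opposed to $\Cp(A_{n,m})$) is established in the present paper.

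\begin{proof}
Let $m,n\geq 1$. We check that the underlying graph of $\Dp(A_{n,m})$ satisfies the four hypotheses of Proposition \ref{Prop2}. By Proposition \ref{cubecomp}, the cube completion of the underlying graph of $\Dp(A_{n,m})$ is $\Dp(A_{n,m})$ itself, and by Proposition \ref{Dcontr} it is contractible, hence simply connected; this gives hypothesis \ref{hyp1}. By definition of $\Dp(A_{n,m})$ there is no $1$-corner, which is hypothesis \ref{hyp2}. By Lemma \ref{lemmaK23} there is no copy of $K_{2,3}$ in the $1$-skeleton, which is hypothesis \ref{hyp4}; equivalently there is no $2$-corner. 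It remains to verify the $3$-cube condition, hypothesis \ref{hyp3}. Let $C$ be a $3$-corner in $\Dp(A_{n,m})$. If its root is non-attracting, then $C$ can be completed into a $3$-cube by Lemma \ref{lem3cornerA}. If its root is attracting, then $C$ can be completed into a $3$-cube by Proposition \ref{prop3corner}. Hence every $3$-corner spans a $3$-cube, so hypothesis \ref{hyp3} holds. By Proposition \ref{Prop2}, $\Dp(A_{n,m})$ is a $\CAT$ cube complex.
\end{proof}
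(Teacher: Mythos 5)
Your proof is correct and follows essentially the same route as the paper: it invokes Proposition \ref{Dcontr} for simple connectedness, Proposition \ref{cubecomp} to identify $\Dp(A_{n,m})$ with the cube completion of its underlying graph, Lemma \ref{lemmaK23} for the absence of $K_{2,3}$, and Lemma \ref{lem3cornerA} together with Proposition \ref{prop3corner} for the $3$-cube condition, before applying Proposition \ref{Prop2}. No discrepancies with the paper's argument.
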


\begin{proof}
Let $n,m\geq 1$. By Proposition \ref{Dcontr} $\Dp(A_{n,m})$ is contractible and thus simply connected. Proposition \ref{cubecomp} allows us to use Proposition \ref{Prop2} with $\Dp(A_{n,m})$ in order to show that $\Dp(A_{n,m})$ is $\CAT$. Thus, we need to show that $\Dp(A_{n,m})$ satisfies the assumptions in Proposition \ref{Prop2}. By definition we do not have $1$-corners and by Lemma \ref{lemmaK23} there is no $2$-corner. In Example \ref{exA12} we show that we don't have $3$-corners in $\Dp(A_{1,1})$ and $\Dp(A_{1,2})$ so by Proposition \ref{prop3corner} every $3$-corners with an attracting root can be completed into a $3$-cube. By Lemma \ref{lem3cornerA} every $3$-corners with a non-attracting root can be completed into a $3$-cube. Hence, by Proposition \ref{Prop2} we conclude that $\Dp(A_{n,m})$ is $\CAT$.
 \end{proof}

Now we define the collection of cube complexes,
$$
\mathcal{E}(A_{n,m}):= \left\{
    \begin{array}{ll}
        \Dp(A_{n,m-n+1}) & \mbox{if } m > n+1 \geq 1 \\
        \Cp(A_{n,m}) & \mbox{if } 1 \leq m \leq n+1
    \end{array}.
\right.
$$

\begin{cor}\label{EndCor}   
     For all $m,n \geq 1$, $\amod(A_{n,m})$ acts on the cube complex $\mathcal{E}(A_{n,m})$ which is $\CAT$.
\end{cor} 

\begin{proof}
    Let $m,n \geq 1$, if $1 \leq m \leq n+1$ then $\amod(A_{n,m})$ acts on $\mathcal{E}(A_{n,m})=\Cp(A_{n,m})$ which is $\CAT$ by Theorem \ref{Prop1}. Otherwise, $m > n+1$ and by Lemma \ref{LemmaIsoStruct} $\amod(A_{n,m})\cong \amod^*(A_{n,m-n+1})$, now the latter group acts on $\mathcal{E}(A_{n,m})= \Dp(A_{n,m-n+1})$ which is $\CAT$ by Theorem \ref{DpisCAT0}.
\end{proof}
 
\printbibliography
\vspace{10pt}
\end{document}